\newcommand \reg{\operatorname{reg}}
\newcommand\oc{\operatorname{oc}}
\newcommand \Tor{\operatorname{Tor}}
\newcommand \pd{\operatorname{pd}}
\newcommand \K{\mathbb{K}}
\newcommand{\KK}{\mathbb{K}}
\renewcommand{\vec}[1]{\mathbf{#1}}
\newtheorem{theorem}{Theorem}[section]
\newtheorem{lemma}[theorem]{Lemma}
\newtheorem{proposition}[theorem]{Proposition}
\newtheorem{remark}[theorem]{Remark}
\newtheorem{corollary}[theorem]{Corollary}
\begin{document}
	\title[Parity binomial edge ideals]{Regularity of parity binomial edge ideals}
	\author[Arvind Kumar]{Arvind Kumar}
	\email{arvkumar11@gmail.com}
	\address{Department of Mathematics, Indian Institute of Technology
		Madras, Chennai, INDIA - 600036}

	\begin{abstract}
		Let $G$ be a  simple graph on $n$ vertices and $\mathcal{I}_G$ denotes  parity binomial edge ideal of $G$ in the polynomial ring $S = \K[x_1,
		\ldots, x_n, y_1, \ldots, y_n].$ We obtain lower bound for the regularity of parity binomial edge ideals of graphs.  We then  classify all graphs whose parity binomial edge ideals have regularity $3$.  We   classify graphs whose parity binomial edge 
		ideals have pure resolution.  
	\end{abstract}
	\keywords{Parity binomial edge ideal, Lov\'asz-Saks-Schrijver(LSS) ideal, Binomial edge ideal,  Betti number, regularity}
	\thanks{AMS Subject Classification (2010): 13D02,13C13, 05E40}
	\maketitle
	\section{Introduction}
	The parity binomial edge ideal of a graph was introduced by Kahle et al. in \cite{KCT}.
	Let $\K$ be any field. Let $G$ be a  simple graph on the vertex set $V(G) =[n]:= \{1, \ldots, n\}$. Let $S= \K[x_1, \ldots, x_{n}, y_1, \ldots, y_{n}]$ be the polynomial ring in $2n$ variables with $\deg(x_i) =\deg(y_i)=1$.  The parity binomial edge ideal of a graph $G$ is defined as $$
	\mathcal{I}_G=(x_ix_j-y_iy_j : \{i,j\} \in E(G)) \subset S.$$ Many algebraic properties such as primary decomposition, mesoprimary decomposition, Markov bases, and radicality of parity binomial edge ideal were studied in \cite{KCT}.  Bolognini et al. \cite[Corollary 6.2]{dav} proved  that if $G$ is a bipartite graph, then the parity binomial edge ideal of $G$ is essentially the same as the binomial edge ideal of  $G$. The binomial edge ideal of a graph $G$ was introduced by Herzog et al. in \cite{HH1} and independently by
	Ohtani in \cite{oh}, and it is  defined  as $$J_G = (
	x_i y_j - x_j y_i ~ : i < j, \; \{i,j\} \in E(G)) \subset
	S.$$  In \cite{dav}, Bolognini et al. studied the Cohen-Macaulayness of binomial edge ideals of bipartite graphs. The algebraic properties of binomial edge ideals of Cohen-Macaulay bipartite graphs such as regularity, extremal Betti number were studied in \cite{JACM, CG19}. In \cite{AR2}, we characterized all the graphs whose parity binomial edge ideals are complete intersections or almost complete intersections. We obtained the second graded Betti number and first Syzygy of parity binomial edge ideals of trees and unicyclic graphs in \cite{AR2}. In this article, we aim to study the regularity of parity binomial edge ideals. One of the reasons to explore the regularity of parity binomial edge ideals of graphs is Kahle and  Kr\"usemann conjecture. In \cite{KK19}, Kahle and Kr\"usemann conjectured that if $G$ is a connected graph on $[n]$, then $\ell(G) \leq \reg(S/\mathcal{I}_G) \leq n$, where $\ell(G)$ is the length of a longest induced path in $G$. This conjecture is trivially true for the class of bipartite graphs due to Matsuda and Murai, \cite{MM}. They proved that if $G$ is a connected graph on $[n]$, then  $\ell(G) \leq \reg(S/J_G) \leq n-1$. Jayanthan et al. obtained improved bounds for the regularity of binomial edge ideals of trees, \cite{JNR, JNR1}.  We aim to prove this conjecture for some class of graphs. Another reason to study
	the regularity for parity binomial edge ideal is that the precise expression for the regularity of parity binomial edge ideal is known
	only for some class of graphs, see \cite{THT, JACM, PZ}. We aim to classify graphs whose parity binomial edge ideals have regularity three.
	
	Another important binomial ideal associated to a graph is Lov\'asz-Saks-Schrijver(LSS) ideal of a graph. The ideal $$L_G= (x_ix_j+y_iy_j : \{i,j\} \in E(G))\subset S$$ is known as LSS ideal of the graph $G$.  We refer the reader to \cite{lss89, lov79} for more information on LSS ideals of graphs. In \cite{her3}, Herzog et al. proved that if char$(\K) \neq 2$, then $L_G$ is a radical ideal. Also, they computed the primary decomposition of $L_G$ when $\sqrt{-1} \notin \K$ and char$(\K)\neq 2$. In \cite{AV19}, Conca and Welker studied the algebraic properties  of $L_G$. They proved that $L_G$ is complete intersection if and only if  $G$ does not contain any claw  or an  even cycle (\cite[Theorem 1.4]{AV19}).  In \cite{AR2}, we  proved that $\beta_{i,j}^S(S/L_G) =\beta_{i,j}^S(S/\mathcal{I}_G)$ for all $i,j$ due to which $\reg(S/L_G)=\reg(S/\mathcal{I}_G)$.

	In \cite{her3}, Herzog et al.  introduced the notation of permanental edge ideals of graphs.	The permanental edge ideal of a graph $G$ is denoted by $\Pi_G$, and it is defined as 
	$$\Pi_G =(x_iy_j+x_jy_i : \{i,j\} \in E(G)) \subset S.$$ Herzog et al.  proved that permanental edge ideal of a graph is a radical ideal, in \cite{her3}. If char($\K) \neq 2$, then the permanental edge ideal of a graph is essentially the same as the parity binomial edge ideal of that graph (see Remark \ref{rmk-parity}) which implies that $\reg(S/\Pi_G)=\reg(S/\mathcal{I}_G)$. Thus, the study of regularity of parity binomial edge ideals become more important.
	
	The article is organized as follows. We collect the notation and related definitions in the second section. In Section $3$, we obtain the regularity of parity binomial edge ideals of cycles. Then, we prove Kahle and Kr\"usemann conjecture for some non-bipartite graphs. We also get a regularity lower bound for parity binomial edge ideals of graphs. We then characterize all graphs whose parity binomial edge ideals have regularity $3$. Also, we describe graphs whose parity binomial edge ideals have the pure resolution. 
	
	\section{Preliminaries}
	In this section, we collect all the notions that we will use in this paper. We first recall the necessary definitions from graph theory.
	
	Let $G$  be a  simple graph with the vertex set $[n]$ and edge set
	$E(G)$. A simple graph on the vertex set $[n]$ is said to be a \textit{complete graph}, if
	$\{i,j\} \in E(G)$ for all $i,j \in [n]$. Complete graph on
	$[n]$ is denoted by $K_n$. For $A \subseteq V(G)$, $G[A]$ denotes the
	\textit{induced subgraph} of $G$ on the vertex set $A$, i.e., for
	$i, j \in A$, $\{i,j\} \in E(G[A])$ if and only if $ \{i,j\} \in
	E(G)$.  For a vertex $v$, $G \setminus v$ denotes the  induced
	subgraph of $G$ on the vertex set $V(G) \setminus \{v\}$.  A vertex $v \in V(G)$ is
	said to be a \textit{cut vertex} if $G \setminus v$ has  more
	connected components than $G$. A subset
	$U$ of $V(G)$ is said to be a \textit{clique} if $G[U]$ is a complete
	graph. A vertex $v$ of $G$ is said to be a \textit{free vertex}
	if $v$ is contained in only one maximal clique. 
	The
	\textit{neighbourhood} of a vertex $v$ in $G$ is defined as $N_G(v) := \{u \in V(G) :  \{u,v\} \in E(G)\}$. For a vertex $v$,  $G_v$ is the graph on the vertex set
	$V(G)$ and edge set $E(G_v) =E(G) \cup \{ \{u,w\}: u,w \in N_G(v)\}$.
	The \textit{degree} of a vertex  $v$, denoted by $\deg_G(v)$, is
	$|N_G(v)|$.   For an edge $e$ in $G$, $G\setminus e$ is the graph
	on the vertex set $V(G)$ and edge set $E(G) \setminus \{e\}$.  Let
	$u,v \in V(G)$ be such that $e=\{u,v\} \notin E(G)$. Then,
	$G_e$ denote a graph on the vertex set $V(G)$ and edge set $E(G_e) = E(G) \cup
	\{\{x,y\} : x,\; y \in N_G(u) \; or \; x,\; y \in N_G(v) \}$.  A
	connected graph $G$ is called a \textit{cycle} if  $\deg_G(v) = 2$  for all $v \in V(G)$. 
	A cycle on $n$ vertices is denoted by $C_n$.  A graph is a \textit{tree} if it does not contain a
	cycle. A
	graph is said to be a \textit{unicyclic} graph if it contains exactly
	one cycle. The \textit{girth} of a graph $G$ is the length of  a shortest
	cycle in $G$. A unicyclic graph with even girth is called an \textit{even unicyclic} and with odd girth is called  an \textit{odd unicyclic} graph.   A graph $G$ is said to be \textit{bipartite} if 
	there is a bipartition of $V(G)=V_1 \sqcup V_2$ such that for each
	$i=1,2$, no two of the vertices of $V_i$ are adjacent. A graph is  called a  \textit{non-bipartite} graph if it is not a bipartite graph.   A maximal subgraph of $G $ without a cut vertex is called a  \textit{block} of $G$. A graph $G$ is said to be a  \textit{block} graph if each  block of $G$ is a clique. A graph on $4$ vertices with $5$ edges is said to be a \textit{diamond graph}. 
	
	Now, we recall the  necessary notation from commutative algebra.
	Let $R = \KK[x_1,\ldots,x_m]$ be a standard graded polynomial ring over an arbitrary
	field $\KK$ and $M$ be a finitely generated graded  $R$-module. 
	Let
	\[
	0 \longrightarrow \bigoplus_{j \in \mathbb{Z}} R(-j)^{\beta_{p,j}^R(M)} 
	\overset{\phi_{p}}{\longrightarrow} \cdots \overset{\phi_1}{\longrightarrow} \bigoplus_{j \in \mathbb{Z}} R(-j)^{\beta_{0,j}^R(M)} 
	\overset{\phi_0}{\longrightarrow} M\longrightarrow 0,
	\]
	be the minimal graded free resolution of $M$, where
	$R(-j)$ is the free $R$-module of rank $1$ generated in degree $j$.
	The number $\beta_{i,j}^R(M)$ is  called the 
	$(i,j)$-th graded Betti number of $M$. The \textit{regularity} of $M$, denoted by $\reg(M)$, is defined as \[\reg(M):=\max\{j-i : \beta_{i,j}^R(M) \neq 0, \; i \geq 0\}.\]
	
	The following fundamental property of   regularity is used repeatedly in this
	article. We refer the reader to the book \cite[Chapter 18]{Peeva} for more properties of regularity.
	
	\begin{lemma}\label{regularity-lemma}
		Let ${M},{N}$ and ${P}$ be finitely generated graded ${S}$-modules. 
		If $$ 0 \rightarrow {M} \xrightarrow{f}  {N} \xrightarrow{g} {P} \rightarrow 0$$ is a 
		short exact sequence with $f,g$  
		graded homomorphisms of degree zero, then 
		\begin{enumerate}
			\item $\reg({P}) \leq \max\{\reg({N}),\reg({M})-1\}$.
			\item $\reg({P}) = \reg({N})$, if $\reg({N}) >\reg({M})$. 
		\end{enumerate}
	\end{lemma}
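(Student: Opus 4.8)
The plan is to replace the definition of regularity via graded Betti numbers by the equivalent statement $\reg(M)=\max\{j-i : \Tor_i^S(M,\K)_j\neq 0\}$ (using $\beta_{i,j}^S(M)=\dim_\K\Tor_i^S(M,\K)_j$) and then to chase the long exact sequence obtained by applying $-\otimes_S\K$ to the given short exact sequence. Since $f$ and $g$ have degree zero, in every internal degree $j$ one gets the exact sequence
\[
\cdots \longrightarrow \Tor_i^S(M,\K)_j \longrightarrow \Tor_i^S(N,\K)_j \longrightarrow \Tor_i^S(P,\K)_j \overset{\partial}{\longrightarrow} \Tor_{i-1}^S(M,\K)_j \longrightarrow \cdots,
\]
where the connecting map $\partial$ lowers homological degree by one and preserves internal degree.

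For part (1), I would take a pair $(i,j)$ with $\Tor_i^S(P,\K)_j\neq 0$ and split into two cases according to whether a chosen nonzero element lies in $\ker\partial$. If it does, exactness at $\Tor_i^S(P,\K)_j$ forces $\Tor_i^S(N,\K)_j\neq 0$, hence $j-i\le\reg(N)$; if it does not, then $\Tor_{i-1}^S(M,\K)_j\neq 0$, hence $j-(i-1)\le\reg(M)$, i.e.\ $j-i\le\reg(M)-1$. Taking the maximum over all such $(i,j)$ yields $\reg(P)\le\max\{\reg(N),\reg(M)-1\}$.

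For part (2), I would first record the companion bound $\reg(N)\le\max\{\reg(M),\reg(P)\}$, proved the same way using exactness at $\Tor_i^S(N,\K)_j$: a nonzero element there either comes from $\Tor_i^S(M,\K)_j$ or maps nontrivially to $\Tor_i^S(P,\K)_j$, so $j-i\le\max\{\reg(M),\reg(P)\}$. Now assume $\reg(N)>\reg(M)$. Part (1) gives $\reg(P)\le\max\{\reg(N),\reg(M)-1\}=\reg(N)$. The companion bound gives $\reg(N)\le\max\{\reg(M),\reg(P)\}$, and since $\reg(N)>\reg(M)$ the maximum must be $\reg(P)$, so $\reg(N)\le\reg(P)$; combining the two inequalities gives $\reg(P)=\reg(N)$. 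The argument is entirely formal, and the only point that needs care is the bookkeeping of the internal degree across the connecting homomorphism $\partial$ (homological degree drops by one, internal degree is unchanged); one could run the identical chase with local cohomology modules and the formula $\reg(M)=\max\{i+j : H^i_{\mathfrak{m}}(M)_j\neq 0\}$ instead, where $\partial$ raises cohomological degree by one. I do not anticipate a genuine obstacle here, as this is a standard homological lemma included for repeated use.
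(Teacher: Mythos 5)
Your proof is correct. The paper does not actually prove this lemma --- it states it as a standard fact and refers the reader to \cite[Chapter 18]{Peeva} --- and your argument (identifying $\reg$ via $\Tor_i^S(-,\K)_j$ and chasing the graded long exact sequence of Tor, with the connecting map dropping homological degree by one while preserving internal degree) is precisely the standard proof one finds there; both the case split for part (1) and the use of the companion bound $\reg(N)\leq\max\{\reg(M),\reg(P)\}$ for part (2) are handled correctly.
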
 
	
	Let $G$ be a graph on $[n]$ and $S=\K[x_1,\ldots,x_n,y_1,\ldots,y_n]$. For an edge $e = \{i,j\}\in E(G)$ with $i <
	j$, we define $f_e = f_{i,j} :=x_i y_j - x_j y_i$,  $g_e=g_{i,j}:=x_ix_j+y_iy_j$ and $\bar{g}_e=\bar{g}_{i,j}:=x_ix_j-y_iy_j$.
	\section{Parity Binomial Edge Ideals}
	In this section, we study some homological invariants associated with the minimal free resolution of $S/\mathcal{I}_G$. We  classify graphs whose parity binomial edge ideals have regularity $3$. We characterize  graphs whose parity binomial edge  ideals have the pure resolution.  
	
	First, we recall a fact about bipartite graphs from \cite{dav}. 
	\begin{remark}\cite[Corollary 6.2]{dav}\label{main-rmk}\rm{
			Let $G$ be a bipartite graph with bipartition $[n]=V_1\sqcup V_2$. We define 
			$\varPhi : S \rightarrow S$ as \[\varPhi(x_i)= \left\{
			\begin{array}{ll}
			x_i & \text{ if } i \in V_1 \\
			y_i & \text{ if } i \in V_2
			\end{array} \right.  \text{  and  } \;~  \varPhi(y_i)= \left\{
			\begin{array}{ll}
			y_i & \text{ if } i \in V_1 \\
			x_i & \text{ if } i \in V_2.
			\end{array} \right.\] It is clear that $\varPhi$ is an isomorphism and  $\varPhi(J_G)=\mathcal{I}_G$.  }
	\end{remark}
	We give regularity upper bound for parity binomial edge  ideals of bipartite graphs. 
	\begin{theorem}\label{reg-bipartite}
		Let $G$ be a connected bipartite graph on $[n]$ Then, $\reg(S/\mathcal{I}_G)=\reg(S/J_G) \leq n-1$. Moreover, $\reg(S/\mathcal{I}_G) =n-1$ if and only if $G$ is a path graph.
	\end{theorem}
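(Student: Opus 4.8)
The plan is to reduce the entire statement to the corresponding known facts about binomial edge ideals via the isomorphism $\varPhi$ of Remark~\ref{main-rmk}. Since $G$ is bipartite, $\varPhi(J_G) = \mathcal{I}_G$ and $\varPhi$ is a graded isomorphism of degree zero, so $S/\mathcal{I}_G \cong S/J_G$ as graded $S$-modules; in particular $\beta_{i,j}^S(S/\mathcal{I}_G) = \beta_{i,j}^S(S/J_G)$ for all $i,j$, and hence $\reg(S/\mathcal{I}_G) = \reg(S/J_G)$. This gives the first equality for free.

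Next I would invoke the Matsuda--Murai bound \cite{MM}: for a connected graph $G$ on $[n]$ one has $\reg(S/J_G) \le n-1$. Combined with the previous paragraph, this yields $\reg(S/\mathcal{I}_G) = \reg(S/J_G) \le n-1$, which is the inequality in the statement. The only remaining content is the characterization of equality.

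For the ``moreover'' part, the $(\Leftarrow)$ direction is a direct computation: if $G = P_n$ is a path, then $J_{P_n}$ is well understood (its resolution is linear, or one computes directly that $\reg(S/J_{P_n}) = n-1$), so by the isomorphism $\reg(S/\mathcal{I}_{P_n}) = n-1$. Alternatively this already follows from the known regularity formula $\reg(S/J_G) = \ell(G)$ for closed graphs, or simply from Matsuda--Murai's own equality discussion. For the $(\Rightarrow)$ direction, one uses the classification of connected graphs $G$ on $[n]$ with $\reg(S/J_G) = n-1$: by a result in the binomial-edge-ideal literature (e.g.\ Matsuda--Murai together with the lower bound $\reg(S/J_G)\ge \ell(G)$, where $\ell(G)$ is the longest induced path), equality $\reg(S/J_G)=n-1$ forces $\ell(G) = n-1$, i.e.\ $G$ has a Hamiltonian induced path; for a connected graph this means $G$ is a path. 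So $G$ must be a path graph, completing the equivalence.

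The main obstacle is pinning down exactly which cited result delivers the equality case $\reg(S/J_G) = n-1 \iff G = P_n$ in the generality needed (connected, arbitrary field): one must check that the bound $\ell(G) \le \reg(S/J_G)$ of Matsuda--Murai is field-independent and that $\ell(G) = n-1$ for a connected graph on $[n]$ indeed characterizes the path. Both are elementary---the longest-induced-path lower bound holds over any field, and a connected graph containing an induced path through all $n$ vertices can have no extra edges (a chord would create a shorter longest induced path or a vertex of degree exceeding what a path allows while staying induced)---but the write-up should state these two reductions cleanly rather than leaning on a black-box citation. Everything else is routine transport of structure along $\varPhi$.
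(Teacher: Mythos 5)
The first half of your argument (transport along $\varPhi$ to get $\reg(S/\mathcal{I}_G)=\reg(S/J_G)$, then the Matsuda--Murai bound) is exactly what the paper does, and it is correct.

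The ``moreover'' part has a genuine gap in the $(\Rightarrow)$ direction. You argue that $\reg(S/J_G)=n-1$ together with the lower bound $\ell(G)\le \reg(S/J_G)$ forces $\ell(G)=n-1$. But that inequality points the wrong way: from $\ell(G)\le \reg(S/J_G)=n-1$ you only recover $\ell(G)\le n-1$, which holds trivially for any graph on $[n]$. To conclude $\ell(G)=n-1$ you would need an upper bound of the form $\reg(S/J_G)\le \ell(G)$, which is false in general and certainly not available as a citation. The statement that $\reg(S/J_G)=n-1$ implies $G=P_n$ was in fact a conjecture of Matsuda and Murai, proved by Kiani and Saeedi Madani; this is precisely what the paper cites as \cite[Theorem 3.2]{KM3}, and its proof is a genuine induction on $n$ using deletion of vertices, not a formal consequence of the induced-path lower bound. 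Your final observation --- that a connected graph on $[n]$ with an induced Hamiltonian path must itself be a path --- is fine, but you never legitimately reach the hypothesis of that observation. (Minor side remark: the parenthetical claim that the resolution of $J_{P_n}$ is linear is wrong --- it is the Koszul complex on $n-1$ quadrics --- but your alternative justifications for the $(\Leftarrow)$ direction are adequate.)
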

	\begin{proof}
		The  first assertion immediately follows from Remark \ref{main-rmk} and \cite[Theorem 1.1]{MM}. The second assertion follows from \cite[Theorem 3.2]{KM3}.
	\end{proof}
	
	Here we recall some facts about permanental edge ideals of graphs. 
	Due to the following remark, if char$(\K)\neq 2$, then $\Pi_G$ and $\mathcal{I}_G$ are essentially the same  and if char$(\K) = 2$, then $\Pi_G=J_G$.
	\begin{remark}\label{rmk-parity}\rm{
			Let $G$ be a graph on the vertex set $[n]$. If char$(\K) = 2$, then $\mathcal{I}_G =L_G$ and $\Pi_G =J_G$.  Suppose char($\K)\neq 2$.  We define $\eta : S \rightarrow S$ as 
			\[\eta(x_i)=x_i +  y_i 
			\text{  and  } \;~  \eta(y_i)= 
			x_i- y_i \text{ for all } i \in V(G)
			.\] It is clear that $\eta$ is an isomorphism and $\Pi_G=\eta(\mathcal{I}_G)$.}
	\end{remark}
	
	Now, we move on to study regularity upper bound for parity binomial edge ideals of some non-bipartite graphs. First, we compute the regularity of parity binomial edge ideal of an odd cycle.
	\begin{theorem}\label{reg-odd-cycle}
		Let $G =C_n$, where $n$ is odd. Then, $\reg(S/\mathcal{I}_G) =n$.
	\end{theorem}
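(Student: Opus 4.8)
The plan is to reduce the computation to the Lov\'asz--Saks--Schrijver ideal $L_{C_n}$, whose structure is transparent when $n$ is odd. Since $\beta_{i,j}^S(S/\mathcal{I}_G)=\beta_{i,j}^S(S/L_G)$ for every graph $G$ and all $i,j$ by \cite{AR2}, it is enough to show that $\reg(S/L_{C_n})=n$.

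First I would note that $C_n$ contains no claw, since $\deg_{C_n}(v)=2$ for every vertex $v$, and no even cycle, since the only cycle contained in $C_n$ is $C_n$ itself and $n$ is odd. By \cite[Theorem 1.4]{AV19} this forces $L_{C_n}$ to be a complete intersection, so its $n$ generators $g_{i,j}=x_ix_j+y_iy_j$, $\{i,j\}\in E(C_n)$, form a homogeneous regular sequence of quadrics in $S$. (Alternatively, that $\mathcal{I}_{C_n}$ is a complete intersection already follows from the complete-intersection classification of parity binomial edge ideals in \cite{AR2}.)

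Consequently the minimal graded free resolution of $S/L_{C_n}$ is the Koszul complex on $n$ forms of degree $2$, so $\beta_{i,j}^S(S/L_{C_n})=\binom{n}{i}$ if $j=2i$ and $\beta_{i,j}^S(S/L_{C_n})=0$ otherwise, for $0\le i\le n$. Hence
$$\reg(S/L_{C_n})=\max\{\,2i-i : 0\le i\le n\,\}=n,$$
and combining with the first paragraph gives $\reg(S/\mathcal{I}_{C_n})=n$.

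There is no serious obstacle here: once the two cited facts are in place the proof is pure bookkeeping. The only point worth double-checking is whether \cite[Theorem 1.4]{AV19} imposes a hypothesis on $\K$; if so, one can instead read off the complete-intersection property of $\mathcal{I}_{C_n}$ directly from the classification in \cite{AR2}, which is what makes the argument go through over the arbitrary field $\K$ fixed in the paper.
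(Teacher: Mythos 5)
Your proof is correct and is essentially the paper's argument: the paper simply cites \cite[Theorem 3.4]{AR2} to conclude that $\mathcal{I}_{C_n}$ is a complete intersection on $n$ quadrics and reads off $\reg(S/\mathcal{I}_{C_n})=n$ from the Koszul resolution, exactly as in your final paragraph. Your detour through $L_{C_n}$ and \cite[Theorem 1.4]{AV19} is harmless but unnecessary, and your own parenthetical alternative (invoking the complete-intersection classification of parity binomial edge ideals in \cite{AR2} directly) is precisely what the paper does.
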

	\begin{proof}
		It follows from \cite[Theorem 3.4]{AR2} that $\mathcal{I}_G$ is a complete intersection with $n$ generators of degree two. Therefore, $\reg(S/\mathcal{I}_G) =n$.
	\end{proof}
	To compute the regularity upper bound for  parity binomial edge ideals of some non-bipartite graphs, we need the following lemma from \cite{AR2}.
	
	\begin{lemma}\label{colon-non-bipartite}\cite[Lemma 3.4]{AR2}
		Let $G$ be a non-bipartite graph on $[n]$. Assume that there exists  $e=\{u,v\} \in E(G)$ such that $G\setminus e$ is a bipartite graph. Then, $$\mathcal{I}_{G\setminus e}:\bar{g}_e=\mathcal{I}_{G\setminus e}+(f_{i,j}: i,j \in N_{G\setminus e}(u) \text{ or } i,j \in N_{G\setminus e}(v))=\varPhi(J_{(G\setminus e)_e}).$$
	\end{lemma}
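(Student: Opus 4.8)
The plan is to transport the statement through the isomorphism $\varPhi$ of Remark~\ref{main-rmk}, reducing it to a colon computation for the ordinary binomial edge ideal $J_{G\setminus e}$, and then to compare minimal primes.

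First I record the combinatorial fact forced by the hypotheses: since $G$ is non-bipartite while $G\setminus e$ is bipartite, every odd cycle of $G$ must use the edge $e=\{u,v\}$, so deleting $e$ from such a cycle leaves an even-length path between $u$ and $v$ in $G\setminus e$; hence in every bipartition $V(G\setminus e)=V_1\sqcup V_2$ the vertices $u$ and $v$ lie in the same part, say $u,v\in V_1$. Let $\varPhi$ be the isomorphism of Remark~\ref{main-rmk} attached to this bipartition. It fixes every variable indexed by $V_1$, so $\varPhi(\bar{g}_e)=x_ux_v-y_uy_v$, which I again denote $\bar{g}_e$; moreover $\varPhi(\mathcal{I}_{G\setminus e})=J_{G\setminus e}$, and since $N_{G\setminus e}(u),N_{G\setminus e}(v)\subseteq V_2$ we have $\varPhi(f_{i,j})=\pm f_{i,j}$ whenever $i,j$ lie in one of these two neighbourhoods. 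As $E\big((G\setminus e)_e\big)$ is $E(G\setminus e)$ together with all pairs inside $N_{G\setminus e}(u)$ and inside $N_{G\setminus e}(v)$, applying $\varPhi$ to the middle expression of the statement produces exactly $J_{(G\setminus e)_e}$. Thus the equality of the last two expressions is a formal consequence of Remark~\ref{main-rmk}, and it remains to prove
\[ J_{G\setminus e}:\bar{g}_e=J_{(G\setminus e)_e}. \]

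For the inclusion $J_{(G\setminus e)_e}\subseteq J_{G\setminus e}:\bar{g}_e$ I would argue by hand: $J_{G\setminus e}\cdot\bar{g}_e\subseteq J_{G\setminus e}$ is clear, so it suffices to show $f_{i,j}\bar{g}_e\in J_{G\setminus e}$ for $i,j\in N_{G\setminus e}(u)$ (the case $i,j\in N_{G\setminus e}(v)$ being symmetric), and here one uses the two generators $x_iy_u-x_uy_i$ and $x_jy_u-x_uy_j$ of $J_{G\setminus e}$ to check that both $x_uf_{i,j}$ and $y_uf_{i,j}$ already lie in $J_{G\setminus e}$, whence $f_{i,j}\bar{g}_e=(x_uf_{i,j})x_v-(y_uf_{i,j})y_v\in J_{G\setminus e}$. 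For the reverse inclusion the plan is a comparison of minimal primes: for any graph $H$ the binomial edge ideal $J_H$ is radical and its minimal primes are the ideals $P_S(H)$ indexed by certain cut sets $S\subseteq V(H)$ (see~\cite{HH1}), where $P_S(H)$ adjoins the variables at the vertices of $S$ together with the $2\times2$-minor ideals of the complete graphs on the connected components of $H\setminus S$. When $u,v\notin S$ the element $x_ux_v-y_uy_v$ involves no variable at a vertex of $S$, lies in no such minor ideal (it does not vanish on the associated determinantal variety), and lies in no sum of such ideals on disjoint vertex sets; hence $\bar{g}_e\in P_S(G\setminus e)$ if and only if $u\in S$ or $v\in S$, and therefore $J_{G\setminus e}:\bar{g}_e=\bigcap_{u,v\notin S}P_S(G\setminus e)$. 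It then remains to match the index sets: for $S$ with $u,v\notin S$ the graphs $(G\setminus e)_e\setminus S$ and $(G\setminus e)\setminus S$ have the same connected components, because every new edge surviving in $(G\setminus e)_e\setminus S$ joins two neighbours of $u$ (or of $v$) already lying in the component of $u$ (resp.\ of $v$), so $P_S\big((G\setminus e)_e\big)=P_S(G\setminus e)$; and conversely no minimal cut set $T$ of $(G\setminus e)_e$ contains $u$ or $v$, since $N_{G\setminus e}(u)$ is a clique in $(G\setminus e)_e$ and so deleting $u$ from $T$ keeps the neighbours of $u$ mutually connected, which cannot increase the number of components, i.e.\ $u$ is not essential to $T$. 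These identifications give $J_{(G\setminus e)_e}=\bigcap_{u,v\notin S}P_S(G\setminus e)=J_{G\setminus e}:\bar{g}_e$.

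I expect the main obstacle to be this last step: verifying carefully that $u$ and $v$ can never be essential to a cut set of $(G\setminus e)_e$, and that for the remaining cut sets the component structure --- hence the associated prime --- is genuinely unchanged. An alternative route for the reverse inclusion is a Gr\"obner basis reduction, transporting via $\varPhi$ a Gr\"obner basis of $J_{G\setminus e}$ built from admissible paths and reducing a homogeneous $h$ with $h\bar{g}_e\in J_{G\setminus e}$; but the cut-set bookkeeping seems the cleaner route.
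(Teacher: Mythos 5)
The paper does not actually prove this lemma --- it is imported verbatim from \cite{AR2} --- so there is no in-text argument to compare against; I can only assess your proposal on its own merits, and it is correct and complete modulo standard facts from \cite{HH1}. The reduction step is right: since $G$ is non-bipartite and $G\setminus e$ is bipartite, $u$ and $v$ lie in the same part, so $\varPhi$ fixes $\bar{g}_e$, sends $\mathcal{I}_{G\setminus e}$ to $J_{G\setminus e}$ (using $\varPhi^2=\mathrm{id}$), and sends the added generators $f_{i,j}$ ($i,j$ in $N_{G\setminus e}(u)$ or $N_{G\setminus e}(v)\subseteq V_2$) to $-f_{i,j}$, so the whole statement is equivalent to $J_{G\setminus e}:\bar{g}_e=J_{(G\setminus e)_e}$. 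Note this is \emph{not} the Mohammadi--Sharifan computation $J_{G\setminus e}:f_e=J_{(G\setminus e)_e}$, since the colon is by $\bar{g}_e=x_ux_v-y_uy_v$ rather than by the $2\times 2$ minor $f_e$; your decision to redo it via the radicality of $J_H$ and the primes $P_S$ is therefore necessary and it works. The forward inclusion via $f_{i,j}\bar{g}_e=(x_uf_{i,j})x_v-(y_uf_{i,j})y_v$ with $x_uf_{i,j},y_uf_{i,j}\in J_{G\setminus e}$ (Ohtani's identities) is fine. For the reverse inclusion, the two facts you flag as the main obstacles do hold: (i) $\bar{g}_e\in P_S(G\setminus e)$ iff $u\in S$ or $v\in S$, which is cleanest to see by evaluating at the point $x_i=1,\ y_i=0$ for $i\notin S$ and $x_i=y_i=0$ for $i\in S$, a point of $V(P_S)$ at which $\bar{g}_e=1$; (ii) for $S$ with $u,v\notin S$ the component structures of $(G\setminus e)\setminus S$ and $(G\setminus e)_e\setminus S$ coincide (every added edge joins two surviving neighbours of $u$ or of $v$, already in one component), so $P_S(G\setminus e)=P_S\bigl((G\setminus e)_e\bigr)$ and the cut-point property for such $S$ is the same in both graphs; and (iii) $u$ (resp.\ $v$) is simplicial in $(G\setminus e)_e$, hence never a cut point of $\bigl((G\setminus e)_e\setminus S\bigr)\cup\{u\}$, so no cut set of $(G\setminus e)_e$ contains $u$ or $v$. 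Together these identify the minimal primes of $J_{(G\setminus e)_e}$ with exactly the minimal primes of $J_{G\setminus e}$ avoiding $\bar{g}_e$, which is $J_{G\setminus e}:\bar{g}_e$ by radicality. The one cosmetic point worth tightening: when $G\setminus e$ is disconnected the bipartition is not unique, but $u$ and $v$ necessarily lie in the same component (an odd cycle of $G$ through $e$ gives an even $u$--$v$ walk in $G\setminus e$), so they are forced into the same part of every bipartition and your choice of $\varPhi$ is legitimate.
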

	
	Now, we  give regularity upper bound for parity binomial edge ideals of some non-bipartite graphs. 
	\begin{theorem}\label{reg-non-bipartite}
		Let $G$ be a connected  non-bipartite graph on $[n]$ such that    $G\setminus e$ is a bipartite graph for some $e=\{u,v\} \in E(G)$. If $G$ is not an odd cycle, then $\reg(S/\mathcal{I}_G) \leq n-1$.
	\end{theorem}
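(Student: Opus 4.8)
The plan is to split off the binomial $\bar{g}_e$ corresponding to the extra edge $e$, translate the two modules that appear into \emph{ordinary} binomial edge ideals of graphs on $n$ vertices (via Remark~\ref{main-rmk} and Lemma~\ref{colon-non-bipartite}), and then feed the Matsuda--Murai bound, together with its equality case, into Lemma~\ref{regularity-lemma}. The hypothesis ``$G$ is not an odd cycle'' is used only through a short combinatorial observation about the graph $(G\setminus e)_e$.

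Since $\mathcal{I}_G=\mathcal{I}_{G\setminus e}+(\bar{g}_e)$ by definition, there is a short exact sequence
\[
0\longrightarrow \bigl(S/(\mathcal{I}_{G\setminus e}:\bar{g}_e)\bigr)(-2)\xrightarrow{\ \cdot\bar{g}_e\ } S/\mathcal{I}_{G\setminus e}\longrightarrow S/\mathcal{I}_G\longrightarrow 0 ,
\]
so Lemma~\ref{regularity-lemma}(1) gives
\[
\reg(S/\mathcal{I}_G)\le \max\Bigl\{\reg(S/\mathcal{I}_{G\setminus e}),\ \reg\bigl(S/(\mathcal{I}_{G\setminus e}:\bar{g}_e)\bigr)+1\Bigr\}.
\]
Thus it suffices to prove $\reg(S/\mathcal{I}_{G\setminus e})\le n-1$ and $\reg\bigl(S/(\mathcal{I}_{G\setminus e}:\bar{g}_e)\bigr)\le n-2$.

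For the first bound, $G\setminus e$ is bipartite, so $\reg(S/\mathcal{I}_{G\setminus e})=\reg(S/J_{G\setminus e})$ by Remark~\ref{main-rmk}; since regularity is additive over connected components and each component $H'$ of $G\setminus e$ satisfies $\reg(S/J_{H'})\le |V(H')|-1$ by the Matsuda--Murai bound \cite{MM}, we get $\reg(S/\mathcal{I}_{G\setminus e})\le n-1$. For the colon ideal, Lemma~\ref{colon-non-bipartite} gives $\mathcal{I}_{G\setminus e}:\bar{g}_e=\varPhi(J_{(G\setminus e)_e})$, whence $\reg\bigl(S/(\mathcal{I}_{G\setminus e}:\bar{g}_e)\bigr)=\reg(S/J_{(G\setminus e)_e})$. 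Put $H=(G\setminus e)_e$, a graph on $n$ vertices. If $H$ is disconnected, the same additivity argument yields $\reg(S/J_H)\le n-2$. If $H$ is connected, then $\reg(S/J_H)\le n-1$ by \cite{MM}, with equality only when $H$ is a path by \cite[Theorem 3.2]{KM3}; so the remaining task is to show that $H\cong P_n$ forces $G$ to be an odd cycle.

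This last point is the crux. Suppose $H=(G\setminus e)_e$ is a path. If $N_{G\setminus e}(u)$ contained two distinct vertices $a,b$, then $u$ would be joined in $G\setminus e\subseteq H$ to both $a$ and $b$, while $\{a,b\}\in E(H)$ by the definition of $(G\setminus e)_e$, producing a triangle in a path --- impossible. Hence $\deg_{G\setminus e}(u)\le 1$, and symmetrically $\deg_{G\setminus e}(v)\le 1$, so no new edges are created in passing from $G\setminus e$ to $H$; that is, $P_n\cong H=G\setminus e$. A path has no isolated vertex, so $u$ and $v$ are its two distinct endpoints, and therefore $G=(G\setminus e)\cup\{e\}=C_n$; as $G$ is non-bipartite, $n$ is odd, contradicting the hypothesis. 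Consequently $\reg(S/J_{(G\setminus e)_e})\le n-2$, and substituting the two bounds into the displayed inequality gives $\reg(S/\mathcal{I}_G)\le\max\{n-1,(n-2)+1\}=n-1$. I expect the regularity bookkeeping to be entirely routine; the only real content is the triangle-free argument that keeps $(G\setminus e)_e$ from being a path except in the excluded odd-cycle case.
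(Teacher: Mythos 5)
Your proof is correct and follows essentially the same route as the paper: the same short exact sequence obtained by multiplication by $\bar{g}_e$, Lemma~\ref{colon-non-bipartite} to identify the colon ideal with $\varPhi(J_{(G\setminus e)_e})$, and the Matsuda--Murai bound together with the equality case from \cite{KM3}. If anything you are more careful than the paper at the two combinatorial points: you actually prove, via the triangle argument, that $(G\setminus e)_e$ cannot be a path (the paper only asserts this), and by settling for the bound $n-1$ on $\reg(S/\mathcal{I}_{G\setminus e})$ you sidestep the paper's claim that $G\setminus e$ is never a path, which can fail for particular choices of $e$ (e.g.\ a triangle with a pendant path, deleting the chordal edge of the triangle) but is harmless since $\max\{n-1,(n-2)+1\}=n-1$.
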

	\begin{proof}
		Consider the short exact sequence 
		\begin{equation}\label{ses-LS}
			0 \longrightarrow \frac{S}{\mathcal{I}_{G\setminus e}:\bar{g}_e}(-2) \stackrel{\cdot \bar{g}_e}\longrightarrow \frac{S}{\mathcal{I}_{G\setminus e}} \longrightarrow \frac{S}{\mathcal{I}_G} \longrightarrow 0.
		\end{equation}
		It follows from Lemma \ref{regularity-lemma} and the above short exact sequence  that \[\reg(S/\mathcal{I}_G) \leq \max \{\reg(S/\mathcal{I}_{G\setminus e}), \reg(S/\mathcal{I}_{G\setminus e}:\bar{g}_e)+1\}.\] Since $G$ is not an odd cycle, $G\setminus e$ is not a path graph. Hence, by Theorem \ref{reg-bipartite}, we have $\reg(S/\mathcal{I}_{G\setminus e}) \leq n-2$. Note that $(G\setminus e)_e$ is not a path graph. Therefore, by \cite[Theorem 3.2]{KM3}, we have $\reg(S/J_{(G\setminus e)_e}) \leq n-2$. Now, by Lemma \ref{colon-non-bipartite}, $\reg(S/\mathcal{I}_{G\setminus e}:\bar{g}_e)= \reg(S/J_{(G\setminus e)_e})\leq n-2$. Hence, the desired result follows.
	\end{proof} 
	
	Now, we obtain a regularity lower bound for parity binomial edge ideals.
	\begin{theorem}\label{betti-induced}
		Let $G$ be a graph and $H$ be an induced subgraph of $G$. Then, $\beta_{i,j}^{S_H}(S_H/\mathcal{I}_H) \leq \beta_{i,j}^S(S/\mathcal{I}_G)$, for all $i,j$, where $S_H=\K[x_k,y_k : k \in V(H)]$.
	\end{theorem}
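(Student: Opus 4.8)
The plan is to realize $R_H:=S_H/\mathcal{I}_H$ as an \emph{algebra retract} of $R_G:=S/\mathcal{I}_G$ and then transport this retraction onto the minimal free resolutions via Koszul homology. Put $A=V(H)$ and $B=V(G)\setminus V(H)$, so that $S=S_H[x_b,y_b:b\in B]$. Since $H$ is an \emph{induced} subgraph of $G$, every edge of $H$ is an edge of $G$; hence each generator $x_ix_j-y_iy_j$ ($\{i,j\}\in E(H)$) of $\mathcal{I}_H$ is a generator of $\mathcal{I}_G$, and in particular $\mathcal{I}_HS\subseteq\mathcal{I}_G$.

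First I would set up the retraction. Let $\iota\colon R_H\to R_G$ be induced by the inclusion $S_H\hookrightarrow S$ (well defined since $\mathcal{I}_H\subseteq\mathcal{I}_HS\subseteq\mathcal{I}_G$), and let $\pi\colon R_G\to R_H$ be induced by the surjective $\K$-algebra map $S\to S_H$ that fixes $x_a,y_a$ for $a\in A$ and sends $x_b,y_b\mapsto 0$ for $b\in B$; this descends to $R_G$ because $x_ix_j-y_iy_j$ maps to a generator of $\mathcal{I}_H$ when $i,j\in A$ and to $0$ otherwise. Clearly $\pi\circ\iota=\mathrm{id}_{R_H}$.

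Next I would invoke the standard identification: for a standard graded $\K$-algebra $P/I$ with $P$ a polynomial ring on generators $z_1,\dots,z_m$, the Koszul complex on $z_1,\dots,z_m$ resolves $\K$ over $P$, whence $\beta^P_{i,j}(P/I)=\dim_\K H_i(\bar z_1,\dots,\bar z_m;P/I)_j$, the Koszul homology of $P/I$ with respect to its generators. Applying this to $(S_H,\mathcal{I}_H)$ with generating set $\mathbf{w}_A=\{x_a,y_a:a\in A\}$ and to $(S,\mathcal{I}_G)$ with $\mathbf{w}=\mathbf{w}_A\sqcup\mathbf{w}_B$, $\mathbf{w}_B=\{x_b,y_b:b\in B\}$, it suffices to produce a split monomorphism $H_i(\mathbf{w}_A;R_H)_j\hookrightarrow H_i(\mathbf{w};R_G)_j$ for all $i,j$. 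Using $K(\mathbf{w};R_G)=K(\mathbf{w}_A;R_G)\otimes_{R_G}K(\mathbf{w}_B;R_G)$ together with $\iota(\mathbf{w}_A)=\mathbf{w}_A$ and $\pi(\mathbf{w}_B)=0$, one obtains chain maps
\[
K(\mathbf{w}_A;R_H)\ \xrightarrow{\ \alpha\ }\ K(\mathbf{w};R_G)\ \xrightarrow{\ \beta\ }\ K(\mathbf{w}_A;R_H):
\]
$\alpha$ applies $\iota$ to coefficients and then embeds into the degree-zero slot of the $K(\mathbf{w}_B;-)$ tensor factor; $\beta$ applies $\pi$ to coefficients — collapsing that factor to $K(0,\dots,0;R_H)\cong R_H\otimes_\K\Lambda(\K^{2|B|})$ with zero differential — and then projects onto the exterior-degree-zero summand $K(\mathbf{w}_A;R_H)$. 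A one-line check on a basis element $r\cdot e_T$ gives $\beta\circ\alpha=\mathrm{id}$ at the chain level, hence on homology; so $\alpha$ is a split monomorphism in every (homological, internal) bidegree, which is precisely $\beta^{S_H}_{i,j}(S_H/\mathcal{I}_H)\le\beta^{S}_{i,j}(S/\mathcal{I}_G)$.

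The only delicate point is the bookkeeping forced by the extra variables $\mathbf{w}_B$: a priori $R_G$ has more algebra generators than $R_H$, so the two Koszul complexes are built over different rings in different numbers of variables and cannot be compared naively. What rescues the argument is that $\pi$ \emph{annihilates} $\mathbf{w}_B$, so after applying $\pi$ the $\mathbf{w}_B$-Koszul factor degenerates to an exterior algebra with zero differential, onto whose bottom piece one projects, while $\iota$ never involves $\mathbf{w}_B$ and hence lands in that bottom piece. All maps in sight are graded of degree $0$, so the inequality holds bidegree by bidegree; no hypothesis on connectivity of $G$ or $H$, nor on $\operatorname{char}\K$, is needed. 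In particular one recovers $\reg(S_H/\mathcal{I}_H)\le\reg(S/\mathcal{I}_G)$, the desired regularity lower bound.
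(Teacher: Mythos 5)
Your proposal is correct and follows essentially the same route as the paper: both establish that $S_H/\mathcal{I}_H$ is an algebra retract of $S/\mathcal{I}_G$ via the inclusion and the specialization $x_b,y_b\mapsto 0$ for $b\notin V(H)$, using that $H$ is induced precisely where you use it. The only difference is that the paper then cites a known result on Betti numbers of algebra retracts (\cite[Corollary 2.5]{ohh2000}) for the final inequality, whereas you supply a self-contained proof of that lemma by splitting the Koszul homology, which is a valid (and standard) way to prove the cited result.
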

	\begin{proof}
		First, we claim that $\mathcal{I}_H = \mathcal{I}_G \cap S_H$, where $\mathcal{I}_H$ is the parity binomial edge ideal of $H$ in  $S_H$. Since generators of $\mathcal{I}_H$ are contained in $\mathcal{I}_G$, $\mathcal{I}_H \subset \mathcal{I}_G \cap S_H$. Now, let $g \in \mathcal{I}_G \cap S_H$ be any element. Since $g \in \mathcal{I}_G$, we have \[g = \sum_{ e \in E(G)} h_e \bar{g}_e,\] where  $ h_e \in S$.  Now, set $x_i=y_i =0$, for $i \notin V(H)$ in  $g = \sum_{ e \in E(G)} h_e \bar{g}_e$. As $g \in S_H$, the left-hand side of the equation does not change. In right-hand side, if  $e=\{u,v\}  \in E(G) \setminus E(H)$, then either $u \notin V(H) $ or $v \notin V(H)$. Therefore, we get $g = \sum_{ e \in E(H)} h'_e \bar{g}_e$, where $h'_e$ is obtained from $h_e$ by putting $x_i=y_i =0$ for $i \notin V(H)$. Thus, $g \in \mathcal{I}_H$. Hence, $S_H/\mathcal{I}_H$ is $\K$-subalgebra of $S/\mathcal{I}_G$. Consider, $S_H/\mathcal{I}_H \xhookrightarrow{i} S/\mathcal{I}_G \stackrel{\pi}\rightarrow S_H/\mathcal{I}_H$, where  $\pi$ is an  epimorphism induced by setting $x_i=y_i =0$, for $i \notin V(H)$. Note that $\pi \circ i$ is identity on $S_H/\mathcal{I}_H$. Thus, $S_H/\mathcal{I}_H$ is algebra retract of $S/\mathcal{I}_G$. Hence, the result follows from \cite[Corollary 2.5]{ohh2000}.
	\end{proof}
	Let $\oc(G)$ denote the length of a longest induced odd cycle in $G$, and $\ell(G)$ denote the length of a longest induced path in $G$. If $G$ has no induced odd cycle, we assume that $\oc(G)=0$.
	\begin{corollary}\label{lower-bound}
		Let $G$ be a connected graph on $[n]$. Then, $\reg(S/\mathcal{I}_G)\geq \max\{\ell(G),\oc(G)\}.$
	\end{corollary}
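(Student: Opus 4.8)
The plan is to derive this bound directly from Theorem~\ref{betti-induced} by testing $S/\mathcal{I}_G$ against two well-chosen induced subgraphs. The key observation is that Theorem~\ref{betti-induced} gives, for every induced subgraph $H$ of $G$, that $\beta_{i,j}^{S_H}(S_H/\mathcal{I}_H)\neq 0$ forces $\beta_{i,j}^S(S/\mathcal{I}_G)\neq 0$; taking the maximum of $j-i$ over the nonzero graded Betti numbers of $S_H/\mathcal{I}_H$ therefore yields $\reg(S_H/\mathcal{I}_H)\le \reg(S/\mathcal{I}_G)$. So it suffices to exhibit induced subgraphs realizing regularities $\ell(G)$ and $\oc(G)$.

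For the first term, I would let $H$ be an induced path of $G$ of length $\ell(G)$, that is, a path on $\ell(G)+1$ vertices. Such an $H$ is connected and bipartite, and it is itself a path graph, so Theorem~\ref{reg-bipartite} applies and gives $\reg(S_H/\mathcal{I}_H)=(\ell(G)+1)-1=\ell(G)$. Combined with the retract inequality above, this yields $\reg(S/\mathcal{I}_G)\ge \ell(G)$. (If $G$ has no edge then $\ell(G)=0$ and there is nothing to prove.)

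For the second term, if $G$ contains no induced odd cycle then $\oc(G)=0$ and the inequality $\reg(S/\mathcal{I}_G)\ge 0$ is trivial. Otherwise, let $H$ be an induced odd cycle of $G$ on $\oc(G)$ vertices. Since $\oc(G)$ is odd, Theorem~\ref{reg-odd-cycle} gives $\reg(S_H/\mathcal{I}_H)=\oc(G)$, and again Theorem~\ref{betti-induced} yields $\reg(S/\mathcal{I}_G)\ge \oc(G)$. Putting the two bounds together proves $\reg(S/\mathcal{I}_G)\ge\max\{\ell(G),\oc(G)\}$. There is no real obstacle here: the statement is a formal consequence of Theorem~\ref{betti-induced} once the regularities of induced paths and induced odd cycles are known, and the only point requiring a line of care is the passage from the Betti-number comparison to the regularity comparison.
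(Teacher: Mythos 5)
Your proposal is correct and follows essentially the same route as the paper: take a longest induced path and a longest induced odd cycle, compute their regularities via Theorem~\ref{reg-bipartite} and Theorem~\ref{reg-odd-cycle}, and transfer the bound to $G$ using the Betti-number comparison of Theorem~\ref{betti-induced}. The one step you flag as needing care --- passing from the inequality on graded Betti numbers to the inequality on regularity --- is handled the same way in the paper, so there is nothing to add.
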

	\begin{proof}
		Let $H$ be a longest induced path in $G$ and $S_H=\K[x_j,y_j : j\in V(H)].$ By Theorem \ref{reg-bipartite},  $\reg(S_H/\mathcal{I}_H)=\ell(G)$. Hence, by Theorem \ref{betti-induced}, $\reg(S/\mathcal{I}_G) \geq \reg(S_H/\mathcal{I}_H)=\ell(G)$.  Assume that $G$ is a non-bipartite graph, then $G$ has an induced odd cycle. Let $H'$ be an induced odd cycle such that $\oc(G)=|V(H')|$. Set $S_{H'}=\K[x_j,y_j : j \in V(H')]$. Then, by Theorem \ref{reg-odd-cycle}, $\reg(S_{H'}/\mathcal{I}_{H'})=\oc(G)$. Hence, by Theorem \ref{betti-induced}, $\reg(S/\mathcal{I}_G) \geq \max \{\ell(G),\oc(G)\}$.
	\end{proof}
	We now characterize the graphs whose parity binomial edge ideals have regularity three.
	\begin{theorem}\label{reg-3}
		Let $G$ be a graph on $[n]$ with no isolated vertex. Then, $\reg(S/\mathcal{I}_G)=2$ if and only if either $G=K_2 \sqcup K_2$ or $G$ is a complete bipartite graph other than $K_2$.
	\end{theorem}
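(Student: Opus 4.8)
The plan is to prove both implications separately. For the ``if'' direction the work is a direct resolution computation in one case and a reduction to binomial edge ideals of complete bipartite graphs in the other; for the ``only if'' direction the engine is Corollary~\ref{lower-bound} together with the additivity of Castelnuovo--Mumford regularity under tensor products of $\K$-algebras. Concretely, if $G=K_2\sqcup K_2$ then $\mathcal{I}_G=(\bar{g}_{1,2},\bar{g}_{3,4})$ is generated by two degree-two forms in disjoint sets of variables, hence by a regular sequence, so the Koszul complex $0\to S(-4)\to S(-2)^2\to S\to S/\mathcal{I}_G\to 0$ is a minimal free resolution and $\reg(S/\mathcal{I}_G)=2$. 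If $G=K_{m,n}$ with $m+n\ge 3$, then Remark~\ref{main-rmk} gives $\reg(S/\mathcal{I}_G)=\reg(S/J_{K_{m,n}})$, the lower bound $\reg(S/J_{K_{m,n}})\ge 2$ is immediate (for instance from $\ell(K_{m,n})=2$ and Corollary~\ref{lower-bound}), and I would invoke the known value $\reg(S/J_{K_{m,n}})=2$ for complete bipartite graphs; if a self-contained argument is wanted, the upper bound $\reg(S/J_{K_{m,n}})\le 2$ can be obtained either from the squarefree initial ideal of $J_{K_{m,n}}$ or by induction on $|V(G)|$ via the decomposition $J_G=J_{G_v}\cap\big((x_v,y_v)+J_{G\setminus v}\big)$ applied at a vertex $v$ of the smaller part.

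For the ``only if'' direction, assume $\reg(S/\mathcal{I}_G)=2$ and write $G=G_1\sqcup\cdots\sqcup G_k$ for its connected components. Since $\mathcal{I}_G$ is the sum of the ideals $\mathcal{I}_{G_i}$ in pairwise disjoint variables, $S/\mathcal{I}_G\cong\bigotimes_i S_i/\mathcal{I}_{G_i}$, hence $\reg(S/\mathcal{I}_G)=\sum_i\reg(S_i/\mathcal{I}_{G_i})$. As $G$ has no isolated vertex, each $\mathcal{I}_{G_i}$ is a nonzero ideal generated in degree $2$, so $\reg(S_i/\mathcal{I}_{G_i})\ge 1$; therefore $\sum_i\reg(S_i/\mathcal{I}_{G_i})=2$ forces either $k=1$, or $k=2$ with both summands equal to $1$. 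I next observe that a connected graph $H$ with at least one edge and $\reg(S_H/\mathcal{I}_H)=1$ must equal $K_2$: if $|V(H)|\ge 3$ then, when $H$ is bipartite, it contains an induced $P_3$ (the two endpoints of any three-vertex subpath lie in the same part, hence are non-adjacent), so $\ell(H)\ge 2$, and when $H$ is non-bipartite a shortest odd cycle of $H$ is chordless, so $\oc(H)\ge 3$; in either case Corollary~\ref{lower-bound} gives $\reg(S_H/\mathcal{I}_H)\ge 2$, a contradiction. This settles the case $k=2$: here $G=K_2\sqcup K_2$.

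It remains to treat $k=1$, i.e.\ $G$ connected with $\reg(S/\mathcal{I}_G)=2$. Corollary~\ref{lower-bound} yields $\ell(G)\le 2$ and $\oc(G)\le 2$; since every odd cycle contains a chordless odd cycle, which has length at least $3$, the second inequality says $G$ has no induced odd cycle and so is bipartite. The decisive structural step is the elementary fact that \emph{a connected bipartite graph with no induced $P_4$ is complete bipartite}, which I would prove by contradiction: if $G$ is not complete bipartite, choose a non-adjacent pair $a,b$ lying in opposite parts and a shortest $a$--$b$ path $a=v_0,v_1,\dots,v_r=b$; then $r\ge 3$, and $v_0,v_1,v_2,v_3$ induce a $P_4$ (the pairs $\{v_0,v_2\}$ and $\{v_1,v_3\}$ are non-adjacent by bipartiteness, and $\{v_0,v_3\}$ by minimality of $r$), contradicting $\ell(G)\le 2$. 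Hence $G=K_{m,n}$, and $m+n\ge 3$ because $\reg(S/\mathcal{I}_{K_2})=1\ne 2$. The step I expect to be the main obstacle is the one flagged in the ``if'' direction, namely establishing the precise value $\reg(S/J_{K_{m,n}})=2$ cleanly (the upper bound is the delicate part); everything else is the additivity of regularity over tensor products, Corollary~\ref{lower-bound}, and the two short combinatorial observations above.
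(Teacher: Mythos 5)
Your proof is correct and follows essentially the same route as the paper: Corollary~\ref{lower-bound} rules out non-bipartite graphs and graphs with an induced $P_4$, the disconnected case reduces to $K_2\sqcup K_2$, and the converse uses the Koszul complex for $K_2\sqcup K_2$ together with Remark~\ref{main-rmk} and the known regularity of $J_{K_{m,n}}$ (the paper cites Theorem~1.1 of Schenzel--Zafar for this, so you need not reprove it). You merely spell out two details the paper leaves implicit, namely the additivity of regularity over connected components and the fact that a connected bipartite non-complete-bipartite graph contains an induced $P_4$.
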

	\begin{proof}
		First assume that $\reg(S/\mathcal{I}_G)=2$. If $G$ is non-bipartite, then it follows from Corollary \ref{lower-bound} that $\reg(S/\mathcal{I}_G)\geq 3$. Thus, $G$ is a bipartite graph with bipartition $V_1 \sqcup V_2$. Assume that $G$ is a disconnected graph with connected components $H_1,\ldots, H_c$. Since each component has at least one edge, by Corollary \ref{lower-bound}, $\reg(S_{H_i}/\mathcal{I}_{H_i}) \geq 1$, where $S_{H_i}=\K[x_j,y_j : j \in V(H_i)]$. Therefore, $c=2$ and $\reg(S_{H_i}/\mathcal{I}_{H_i})=1$, for $i=1,2$. If $H_i \neq K_2$, then $\ell(H_i)\geq 2$, which is not possible. Thus, $G=K_2 \sqcup K_2$. Now, if $G$ is a connected  graph  which is not a complete bipartite graph, then $\ell(G)\geq 3$. Therefore, by Theorem \ref{lower-bound}, $\reg(S/\mathcal{I}_G) \geq 3$ which is not possible. Hence, $G$ is a complete bipartite graph.
		
		Conversely, if $G=K_2 \sqcup K_2$, then $\mathcal{I}_G$ is a complete intersection ideal with two generators of degree two. Thus, $\reg(S/\mathcal{I}_G)=2$. Now, if $G$ is a complete bipartite graph, then by Remark \ref{main-rmk} and \cite[Theorem 1.1]{PZ} $\reg(S/\mathcal{I}_G)=2$.
	\end{proof}
	
	Let $I$ be a homogeneous ideal of $S$ such that all generators have degree $d$. If for each $1 \leq i \leq \pd(S/I)$, there exist a unique $d_i >0$ such that $\beta_{i,d_i}^S(S/I) \neq 0$, then we say that $S/I$ has \textit{pure resolution}. We start preparation for characterization of graphs whose parity binomial edge ideals have pure resolution.
	
	%If the minimal free resolution of $S/I$ is 
	%\[
	%0 \longrightarrow  S(-d_p)^{\beta_{p,d_p}^S(S/I)} 
	%\overset{\phi_{p}}{\longrightarrow} \cdots \overset{\phi_2}{\longrightarrow} S(-d_1)^{\beta_{1,d_1}^S(S/I)} \overset{\phi_1}{\longrightarrow} S
	%\overset{\phi_0}{\longrightarrow} S/I\longrightarrow 0,
	%\]
	\begin{theorem}\label{betti-LSS}
		Let $G$ be a graph on $[n]$. Then, we have
		\begin{enumerate}
			\item $\beta_{2,3}(S/\mathcal{I}_G) =0$. In particular, $\beta_{i,i+1}(S/\mathcal{I}_G)=0$, for all $i\neq 1$.
			\item $\beta_{2,4}(S/\mathcal{I}_G) \geq \binom{|E(G)|}{2}$, if $G \neq K_2$.
			\item $\beta_{i,i}(S/\mathcal{I}_{G \setminus e}:\bar{g}_e)=0$, for $i>0$.
			\item $\beta_{3,6}(S/\mathcal{I}_G) \neq 0$, if $G$ is a non-bipartite graph.
			\item If $G$ is an odd unicyclic graph, then $\beta_{i,j}(S/\mathcal{I}_G)=0$, for $j >2i$.
		\end{enumerate} 
	\end{theorem}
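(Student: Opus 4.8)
\emph{Items (1) and (3).} I would use the fine $\mathbb{Z}^2$-grading on $S$ in which $\deg x_i=(1,0)$ and $\deg y_i=(0,1)$; each generator $\bar g_e=x_ix_j-y_iy_j$ is then a difference of a monomial of bidegree $(2,0)$ and one of bidegree $(0,2)$. For (1), suppose $\sum_{e\in E(G)}\ell_e\bar g_e=0$ were a linear syzygy; write $\ell_e=p_e+q_e$ with $p_e\in\K[x_1,\dots,x_n]$ and $q_e\in\K[y_1,\dots,y_n]$, and compare the four bidegree components $(3,0),(2,1),(1,2),(0,3)$. This yields $\sum_e p_e(x_ix_j)=0$, $\sum_e q_e(y_iy_j)=0$, $\sum_e q_e(x_ix_j)=0$ and $\sum_e p_e(x_ix_j\text{'s }y\text{-partner})=0$, i.e. $\sum_e p_e(y_iy_j)=0$. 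Expanding, say, $\sum_e p_e(y_iy_j)=0$ inside $\K[y_1,\dots,y_n][x_1,\dots,x_n]$ and using that the monomials $y_iy_j$ ($\{i,j\}\in E(G)$) are pairwise distinct forces every coefficient, hence every $p_e$, to vanish; symmetrically $q_e=0$. So $\beta_{2,3}(S/\mathcal{I}_G)=0$. The ``in particular'' claim follows by a standard minimality argument: $\mathcal{I}_G$ is generated in degree $2$, so $\beta_{0,1}=0$; and by induction on $i\ge 2$, the hypothesis $\beta_{i-1,i}=0$ (together with the automatic vanishing $\beta_{i-1,j}=0$ for $j\le i-1$) shows $F_{i-1}$ is generated in degrees $\ge i+1$, whence a degree-$(i+1)$ generator of $F_i$ would map to a $\K$-linear combination of generators of $F_{i-1}$, contradicting minimality of the resolution; thus $\beta_{i,i+1}=0$. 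For (3), a nonzero linear form $\ell\in\mathcal{I}_{G\setminus e}:\bar g_e$ would give a relation $\ell\,\bar g_e=\sum_{e'\in E(G\setminus e)}h_{e'}\bar g_{e'}$ with linear $h_{e'}$; running the same bidegree comparison and using that the monomials $x_ix_j$ and $y_iy_j$ of $\bar g_e$ do \emph{not} occur among the $\bar g_{e'}$ (since $e\notin E(G\setminus e)$) forces $\ell=0$. Hence $\mathcal{I}_{G\setminus e}:\bar g_e$ contains no nonzero linear form, and as in (1) this propagates to $\beta_{i,i}(S/(\mathcal{I}_{G\setminus e}:\bar g_e))=0$ for all $i>0$.

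\emph{Item (2).} Assume $|E(G)|\ge 2$ (the case $G=K_2$ being trivial since $\binom{1}{2}=0$). By (1) together with $\beta_{2,2}=0$, the first syzygy module $Z$ of $\mathcal{I}_G$ has $Z_j=0$ for $j\le 3$, so $(\mathfrak m Z)_4=0$ and hence $\beta_{2,4}(S/\mathcal{I}_G)=\dim_\K Z_4$. For each unordered pair $\{e,e'\}$ of edges, the Koszul syzygy $\kappa_{e,e'}=\bar g_{e'}\mathbf{e}_e-\bar g_e\mathbf{e}_{e'}$ lies in $Z_4$; comparing components $\mathbf{e}_f$ and using that the binomials $\bar g_e$ are $\K$-linearly independent (distinct leading monomials $x_ix_j$) shows the $\kappa_{e,e'}$ are $\K$-linearly independent. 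Therefore $\beta_{2,4}(S/\mathcal{I}_G)\ge\binom{|E(G)|}{2}$.

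\emph{Item (4).} If $G$ is non-bipartite it contains an odd cycle; a shortest odd cycle $C=C_{2k+1}$, $2k+1\ge3$, is an induced subgraph. By (the proof of) Theorem~\ref{reg-odd-cycle}, $\mathcal{I}_C$ is a complete intersection of $2k+1$ quadrics, so $S_C/\mathcal{I}_C$ has a Koszul resolution and $\beta_{3,6}^{S_C}(S_C/\mathcal{I}_C)=\binom{2k+1}{3}\ge1$. By Theorem~\ref{betti-induced}, $\beta_{3,6}(S/\mathcal{I}_G)\ge\binom{2k+1}{3}>0$.

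\emph{Item (5).} One may assume $G$ connected: for a disjoint union the coordinate rings tensor, and $j_1\le 2i_1$, $j_2\le 2i_2$ give $j_1+j_2\le 2(i_1+i_2)$. I would then induct on $|V(G)|$. If $G=C_{2k+1}$ is the odd cycle, $\mathcal{I}_G$ is a complete intersection of quadrics, so $\beta_{i,j}(S/\mathcal{I}_G)\neq0$ only when $j=2i$ and we are done. Otherwise pick a cycle edge $e$; then $G\setminus e$ is a tree (hence bipartite) with the same vertex set, and the short exact sequence \eqref{ses-LS}, combined with Lemma~\ref{colon-non-bipartite} (applicable since $G$ is non-bipartite and $G\setminus e$ is bipartite) and the fact that $\varPhi$ is an isomorphism, gives for all $i,j$
\[
\beta_{i,j}(S/\mathcal{I}_G)\ \le\ \beta_{i,j}\!\left(S/\mathcal{I}_{G\setminus e}\right)+\beta_{i-1,j-2}\!\left(S/J_{(G\setminus e)_e}\right).
\]
By Remark~\ref{main-rmk}, $\beta_{i,j}(S/\mathcal{I}_{G\setminus e})=\beta_{i,j}(S/J_{G\setminus e})$. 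Since $j-2>2(i-1)$ is equivalent to $j>2i$, the desired vanishing for $G$ follows once one knows that $\beta_{i,j}(S/J_H)=0$ for $j>2i$ for the binomial edge ideals of the two graphs $H=G\setminus e$ and $H=(G\setminus e)_e$. This last vanishing is the technical heart and, I expect, the main obstacle: $(G\setminus e)_e$ need not be smaller than $G$, so it must be handled by a separate argument, either by a parallel induction exploiting leaves / Ohtani-type decompositions of $J_H$, or by identifying these auxiliary graphs as ones for which the resolution of the binomial edge ideal is already known to stay on or below the line $j=2i$ (as it does, for instance, for complete intersections and for $J_H$ whose initial ideal is a quadratic monomial ideal).
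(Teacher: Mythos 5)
Parts (1)--(4) of your proposal are correct, and for (1) and (3) you take a genuinely different route from the paper. The paper proves (1) by endowing $S$ with the $\mathbb{Z}^n$-multigrading $\deg x_i=\deg y_i=\epsilon_i$, so that a putative linear syzygy is confined to the edges on a three-element vertex set $\{i,j,k\}$, and then invokes the fact that $(\bar g_{i,j},\bar g_{i,k},\bar g_{j,k})$ is a complete intersection (\cite[Theorem 3.4]{AR2}); it then deduces (3) from (1) and the Tor long exact sequence of \eqref{ses-LS}. Your $(\deg x,\deg y)$-bidegree decomposition is sound: the mixed components $(2,1)$ and $(1,2)$ do force all $p_e,q_e$ to vanish because a monomial $x_ky_iy_j$ (resp.\ $y_kx_ix_j$) remembers the edge $\{i,j\}$, and in (3) the monomials $x_ux_v,y_uy_v$ of $\bar g_e$ cannot occur on the right-hand side since $e\notin E(G\setminus e)$. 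This is more self-contained than the paper's argument (no complete-intersection input, and (3) is obtained directly rather than homologically). Parts (2) and (4) coincide with the paper's proofs, with your explicit linear-independence check of the Koszul syzygies being a welcome addition.

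The genuine gap is in (5). Your reduction via \eqref{ses-LS} and Lemma \ref{colon-non-bipartite} to the vanishing $\beta_{i,j}(S/J_H)=0$ for $j>2i$, for $H=G\setminus e$ (a tree) and $H=(G\setminus e)_e$ (a block graph on the same vertex set), is exactly right, but you stop there and flag this vanishing as an expected obstacle without proving it. This is the technical heart of the paper's argument: Theorem \ref{betti-chordal} establishes $\beta_{i,j}(S/J_H)=0$ for $j>2i$ for every connected chordal graph $H$, by induction on $|V(H)|$ using a free vertex $v$, peeling off the edges at $v$ one at a time, and the colon formula $J_{H\setminus e}:f_e=J_{(H\setminus e)_e}$ of \cite{FM}; the induction closes because the colon ideals that arise are sums of variable ideals with binomial edge ideals of chordal graphs on strictly fewer vertices. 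Your first suggested remedy (a parallel induction exploiting leaves and Ohtani-type decompositions) is precisely this argument, but it is not carried out; your second suggestion (quadratic initial ideals) does not obviously apply to arbitrary block or chordal graphs. As written, the proof of (5) is therefore incomplete.
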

	\begin{proof}\par (1)
		Consider, the minimal free resolution of $S/\mathcal{I}_G$, \[\cdots \longrightarrow S^{|E(G)|}(-2) \stackrel{\varphi}\longrightarrow S \longrightarrow S/\mathcal{I}_G \longrightarrow 0 .\] Let $\{e_{\{i,j\}} : \{i,j\} \in E(G) \}$ be a basis of  $S^{|E(G)|}(-2)$. Note that $\varphi(e_{\{i,j\}}) =\bar{g}_{i,j}$, for all $\{i,j\} \in E(G)$. Let $\epsilon_i$ denote the $i$-th standard basis vector of $\mathbb{Z}^n$. If, we set $\deg(x_i)=\deg(y_i)=\epsilon_i$, then $\mathcal{I}_G$ is $\mathbb{Z}^n$-graded. Therefore, $\deg(e_{\{i,j\}})=\deg(\bar{g}_{i,j})=\epsilon_i+\epsilon_j$. Let $M_1$ be the  first syzygy module of $\mathcal{I}_G$.  Let if possible, \[r=\sum_{\{i,j\} \in E(G)} h_{\{i,j\}}e_{\{i,j\}} \in M_1,\] be a relation of degree three in standard grading. Since $\mathcal{I}_G$ is $\mathbb{Z}^n$-graded, $M_1$ is also $\mathbb{Z}^n$-graded. Therefore, $M_1$ is generated by $\mathbb{Z}^n$-graded homogeneous elements. We can assume that $r$ is $\mathbb{Z}^n$-graded homogeneous element with $\deg(r)=\vec{a} \in \mathbb{Z}^n$ and  total degree $|\vec{a}|=3$, where $|\vec{a}|$ is sum of components of $\vec{a}$. If $h_{\{i,j\}} e_{\{i,j\}} \neq 0$, then $\deg(r)=\vec{a}=\deg(h_{\{i,j\}})+\epsilon_i+\epsilon_j$. Therefore, in standard grading,  degree of $h_{\{i,j\}}$  is one, and hence, $\deg(h_{\{i,j\}}) = \epsilon_k$, for some $k$. If $k=i$ or $k=j$, then $r=h_{\{i,j\}}e_{\{i,j\}}$, which is not possible. If $k\neq i$ and $k \neq j$, then $r=h_{\{i,j\}}e_{\{i,j\}} +h_{\{i,k\}}e_{\{i,k\}}+h_{\{j,k\}}e_{\{j,k\}}$. Thus, $r$ is relation of ideal $(\bar{g}_{i,j},\bar{g}_{i,k},\bar{g}_{j,k})$. It follows from \cite[Theorem 3.4]{AR2} that $(\bar{g}_{i,j},\bar{g}_{i,k},\bar{g}_{j,k})$ is complete intersection. Consequently, there has no relation of degree three in  the first syzygy of $(\bar{g}_{i,j},\bar{g}_{i,k},\bar{g}_{j,k})$, which is contradiction. Hence, the desired result follows. 
		\par (2) Since $G \neq K_2$, $G$ has atleast two edges, say $\{i,j\}, \{k,l\}$. Therefore, $r=\bar{g}_{k,l}e_{\{i,j\}} -\bar{g}_{i,j}e_{\{k,l\}} \in M_1$ is relation of degree four. Since $\beta_{2,3}(S/\mathcal{I}_G)=0$, $r$ is a minimal relation. Hence, $\beta_{2,4}(S/\mathcal{I}_G) \geq \binom{|E(G)|}{2}$.
		\par (3) Consider the  long exact sequence of Tor corresponding to the short exact sequence \eqref{ses-LS}, \[\cdots	\rightarrow \Tor_{2,3}^S\left(\frac{S}{\mathcal{I}_{G}},\K\right) 
		\rightarrow \Tor_{1,3}^S\left(\frac{S}{\mathcal{I}_{G\setminus e}:\bar{g}_e}(-2),\K\right)\rightarrow  \Tor_{1,3}^S\left(\frac{S}{\mathcal{I}_{G\setminus e}},\K\right)\rightarrow \cdots.\] Since $\beta_{1,3}(S/\mathcal{I}_{G\setminus e})=0$,  by part $(1)$, $\beta_{2,3}(S/\mathcal{I}_G)=0$, and  \[\Tor_{1,3}^S\left(\frac{S}{\mathcal{I}_{G\setminus e}:\bar{g}_e}(-2),\K\right) \simeq \Tor_{1,1}^S\left(\frac{S}{\mathcal{I}_{G\setminus e}:\bar{g}_e},\K\right),\] $\beta_{1,1}(S/\mathcal{I}_{G\setminus e}:\bar{g}_e)=0$. Hence, the desired result follows.
		\par (4) Since $G$ is a non-bipartite graph, $G$ has an induced odd cycle, say $H=C_k$. It follows from \cite[Theorem 3.4]{AR2} that $\mathcal{I}_{H}$ is a complete intersection ideal. Therefore, $\beta_{3,6}^{S_H}(S_H/\mathcal{I}_{H}) = \binom{k}{3} \neq 0$, and hence, by Theorem \ref{betti-induced}, $\beta_{3,6}(S/\mathcal{I}_G) \neq 0$.
	\end{proof}
	To prove $(5)$ we need the following theorem.
	\begin{theorem}\label{betti-chordal}
		Let $G$ be a connected chordal graph on $[n]$. Then, $\beta_{i,j}(S/J_G) =0$, for $j>2i$.
	\end{theorem}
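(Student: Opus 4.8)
I would prove the vanishing $\beta_{i,j}(S/J_G)=0$ for $j>2i$ (equivalently, the Betti table of $S/J_G$ lies in the ``staircase'' region $0\le j-i\le i$) by induction, using short exact sequences to break $J_G$ into binomial edge ideals of smaller or more ``cliquey'' chordal graphs, with complete graphs as the base case. First I would reduce to $G$ connected with at least one edge: adjoining isolated vertices does not change the graded Betti numbers, and for a disjoint union they are obtained from those of the components by the usual convolution, under which the region $j>2i$ is preserved (if $j>2(a+b)$ and $c+d=j$ then $c>2a$ or $d>2b$, so one factor vanishes). The base case $G=K_n$ is then immediate: $J_{K_n}$ is the ideal of $2\times 2$ minors of a generic $2\times n$ matrix, whose minimal free resolution is the Eagon--Northcott complex, which is $2$-linear; hence $\beta_{i,j}(S/J_{K_n})=0$ for $j\neq i+1$ when $i\ge 1$, and in particular for $j>2i$.

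\textbf{Inductive step.} For $G$ connected, chordal and not complete, choose an edge $e=\{u,v\}$ incident to a simplicial vertex $v$ (such a vertex exists; if it can be chosen to be a leaf, so much the better). From $J_G=J_{G\setminus e}+(f_e)$ one gets the short exact sequence
\[
0\longrightarrow \frac{S}{J_{G\setminus e}:f_e}(-2)\xrightarrow{\ \cdot f_e\ }\frac{S}{J_{G\setminus e}}\longrightarrow\frac{S}{J_G}\longrightarrow 0,
\]
and the long exact sequence of $\Tor$ yields, for all $i,j$,
\[
\beta_{i,j}(S/J_G)\ \le\ \beta_{i,j}(S/J_{G\setminus e})+\beta_{i-1,j-2}\bigl(S/(J_{G\setminus e}:f_e)\bigr).
\]
Since $j>2i$ forces $j-2>2(i-1)$, it suffices to know that both $S/J_{G\setminus e}$ and $S/(J_{G\setminus e}:f_e)$ satisfy the vanishing in the range $j>2i$. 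For the first term I would use that $G\setminus e$ is again chordal: $v$ is still simplicial in it (its neighbourhood shrinks inside a clique), and $(G\setminus e)\setminus v=G\setminus v$ is chordal, so prepending $v$ to a perfect elimination ordering of $G\setminus v$ gives one of $G\setminus e$; moreover $G\setminus e$ has at most $n$ vertices, strictly fewer edges, and $v$ of strictly smaller degree. For the colon ideal I would use the description of $J_{G\setminus e}:f_e$ as the intersection of the minimal primes $P_T(G\setminus e)$ for which $T$ separates $u$ from $v$ in $G\setminus e$; exploiting that $v$ is simplicial (so $N_{G\setminus e}(v)$ is a clique), this should collapse to $J_H+(x_w,y_w:w\in W)$ for a chordal graph $H$ on at most $n$ vertices and a set $W$, and such an ideal inherits the staircase vanishing from $S/J_H$, since adjoining the regular sequence of linear forms $x_w,y_w$ merely tensors the resolution with a Koszul complex (the terms $\binom{2|W|}{b}\beta_{i-b,j-b}(S/J_H)$ all vanish once $j>2i$).

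\textbf{Termination.} The point needing care is that $G\setminus e$ and the colon ideal can still involve $n$ vertices, so one cannot simply induct on $n$. I would handle this by a secondary induction: repeatedly delete edges at the fixed simplicial vertex $v$ (it stays simplicial, with neighbourhood always a subclique of the original $N_G(v)$, and its degree strictly drops in every branch of the recursion); after finitely many steps $v$ becomes a leaf or an isolated vertex, at which point the corresponding deletion drops the vertex count and the outer induction applies. Propagating the displayed inequality up this finite tree of reductions gives $\beta_{i,j}(S/J_G)=0$ for $j>2i$.

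\textbf{Main obstacle.} The crux is the colon-ideal analysis: unlike the parity/bipartite situation of Lemma~\ref{colon-non-bipartite}, $J_{G\setminus e}:f_e$ is \emph{not} in general the binomial edge ideal of the fill-in graph $(G\setminus e)_e$ (already for $G=K_3$, $e=\{1,3\}$ one gets $J_{P_3}:f_{13}=(x_2,y_2)$, not $J_{P_3}$), so one must pin down precisely what this colon ideal is for $G\setminus e$ chordal with $v$ simplicial and confirm it is a binomial edge ideal of a chordal graph together with linear forms. The rest --- checking that all the auxiliary graphs produced are chordal, that the recursion is finite, and that the staircase region is stable under the exact sequences and under tensoring with Koszul complexes --- is routine but needs to be organized carefully.
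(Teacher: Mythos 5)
Your proposal follows essentially the same route as the paper: induction on the number of vertices, peeling off the edges at a free (simplicial) vertex $v$ one at a time via the short exact sequence for $J_{G\setminus e}:f_e$, bounding $\beta_{i,j}(S/J_G)$ by the corresponding Betti numbers of the deletion and the colon ideal, and noting that adjoining a regular sequence of variables preserves the region $j>2i$. The one step you flag as the ``main obstacle'' --- the precise form of the colon ideal --- is exactly what the paper imports from \cite[Theorem 3.4]{FM}: for a free vertex $v$ with neighbours $v_1,\ldots,v_t$ one has $J_{G\setminus \{e_{k+1},\ldots,e_t\}}:f_{e_{k+1}}=(x_{v_i},y_{v_i}:1\le i\le k)+J_{H_k}$ with $H_k$ a chordal graph on at most $n-1$ vertices, so your guessed form $J_H+(x_w,y_w:w\in W)$ is correct and your termination scheme (delete edges at $v$ until it is a leaf, then drop the vertex) is precisely the paper's chain of short exact sequences.
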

	\begin{proof}
		We make  an induction on the number of vertices. If $n=2$, then $G=K_2$, and hence, the result follows. Assume that $n>2$ and the assertion is true for any connected chordal graph $H$ with $|V(H)| \leq n-1$. Suppose that $G$ has a free vertex, say $v$, such that $\deg_G(v)=1$. Let $u$ be the only neighbour of $v$, and $e=\{v,u\}$ be the edge joining $v$ and $u$. Consider, the short exact sequence \begin{align}\label{main-ses}
			0\longrightarrow \frac{S}{J_{G\setminus e}:f_e}(-2) \stackrel{\cdot f_e}{\longrightarrow} \frac{S}{J_{G\setminus e}} \longrightarrow \frac{S}{J_G} \longrightarrow 0 .
		\end{align} It follows from \cite[Theorem 3.4]{FM} that $J_{G\setminus e}:f_e=J_{(G\setminus e)_e}$.  Observe that $J_{G\setminus e} =J_{G\setminus v}$ and $J_{(G\setminus e)_e}=J_{(G\setminus v)_u}$. Now, consider the long exact sequence of Tor corresponding to the short exact sequence \eqref{main-ses}, \begin{equation}\label{Tor-bino}
			\cdots	\rightarrow \Tor_{i,j}^S\left(\frac{S}{J_{G\setminus e}},\K\right)\rightarrow \Tor_{i,j}^S\left(\frac{S}{J_{G}},\K\right) 
			\rightarrow \Tor_{i-1,j}^S\left(\frac{S}{J_{G\setminus e}:f_e}(-2),\K\right)\rightarrow\cdots.
		\end{equation} 
		Notice that \[\Tor_{i-1,j}^S\left(\frac{S}{J_{G\setminus e}:f_e}(-2),\K\right) \simeq \Tor_{i-1,j-2}^S\left(\frac{S}{J_{G\setminus e}:f_e},\K\right)=\Tor_{i-1,j-2}^S\left(\frac{S}{J_{(G\setminus e)_e}},\K\right).\] Since $v$ is an isolated vertex of $G\setminus e$ and $(G\setminus e)_e$, by induction hypothesis, we have $\beta_{i,j}(S/J_{G\setminus e})=0$ and $\beta_{i-1,j-2}(S/J_{(G\setminus e)_e}) =0$, for $j > 2i$. Hence, the long exact sequence \eqref{Tor-bino} yields that  $\beta_{i,j}(S/J_G) =0,$ for $j >2i$.  
		
		Suppose that all the free vertices of $G$ have degree greater than one. Let $v$ be a free vertex of $G$ and $v_1,\ldots,v_t$ be all the neighbors of $v$, and $e_1,\ldots,e_t$ be the edges joining $v$ to $v_1,\ldots,v_t$, respectively, where $t\geq 2$.  
		For each $1 \leq k \leq t-1$, consider the following short exact sequences,
		\begin{align}\label{main-ses1}
			0\longrightarrow \frac{S}{J_{G\setminus \{e_{k+1},\ldots,e_t\}}:f_{e_{k+1}}}(-2) \stackrel{\cdot f_{e_{k+1}}}{\longrightarrow} \frac{S}{J_{G\setminus \{e_{k+1},\ldots,e_t\}}} \longrightarrow \frac{S}{J_{G \setminus \{e_{k+2},\ldots, e_t\}}} \longrightarrow 0 .
		\end{align}
		Set $H_k= (G \setminus \{v_1,\ldots, v_{k}, e_{k+1}, \ldots, e_t\})_{v_{k+1}}$ and $S_{k}= \K[x_j,y_j : j \in V(H_k)]$, for $1 \leq k \leq t-1$. Here, if $k=t-1$, then we assume that $\{e_{k+2},\ldots, e_t\} =\emptyset$. It follows from \cite[Theorem 3.4]{FM} that \[J_{G\setminus \{e_{k+1},\ldots,e_t\}} :f_{e_{k+1}} =(x_{v_i},y_{v_i}: 1 \leq i \leq k)+J_{H_k}.\] 
		Note that for $1 \leq k \leq t-1$, $|V(H_k)| \leq n-1$. By induction hypothesis,  $\beta_{i,j}(S_k/J_{H_k})=0$, for $j >2i$. Since $\{x_{v_i},y_{v_i}: 1 \leq i \leq k\}$ and generators of $J_{H_k}$ are in  disjoint variables, we have \[\beta_{i,j}(S/J_{G\setminus \{e_{k+1},\ldots,e_t\}}:f_{e_{k+1}})=\sum_{l=0}^i \binom{2k}{l} \beta_{i-l,j-l}(S_k/J_{H_k}).\] If $j>2i$, then for each $0 \leq l \leq i$, $j-l>2(i-l)$. Thus,  $ \beta_{i,j}(S/J_{G\setminus \{e_{k+1},\ldots,e_t\}}:f_{e_{k+1}})=0$, for $j >2i$. The long exact sequence of Tor corresponding to the short exact sequence \eqref{main-ses1} with $j >2i$ is 
		\begin{align*}
			\cdots	\rightarrow \Tor_{i,j}^S\left(\frac{S}{J_{G\setminus \{e_{k+1},\ldots,e_t\}}},\K\right)\rightarrow \Tor_{i,j}^S\left(\frac{S}{J_{G\setminus \{e_{k+2},\ldots,e_t\}}},\K\right) 
			\rightarrow 0. 
		\end{align*}
		Therefore, for $j >2i$, $\beta_{i,j}(S/J_{G \setminus \{e_{k+2},\ldots,e_t\}})=0$, if $\beta_{i,j}(S/J_{G\setminus \{e_{k+1},\ldots,e_t\}})=0$. Notice that $v$ is free vertex of degree one in $G\setminus \{e_{2},\ldots,e_t\}$. Consequently, $\beta_{i,j}(S/J_{G\setminus \{e_2,\ldots,e_t\}})=0$, for $j>2i$.  Hence, for $j >2i$, $\beta_{i,j}(S/J_G)=0$.
	\end{proof}
	\textbf{Proof of Theorem \ref{betti-LSS}(5):}  Let $e=\{u,v\}$ be an edge of the cycle. Now, consider the short exact sequence \eqref{ses-LS}.   The long exact sequence of Tor corresponding to \eqref{ses-LS} is 
	\begin{equation}\label{Tor-les}
		\cdots	\rightarrow \Tor_{i,j}^S\left(\frac{S}{\mathcal{I}_{G\setminus e}},\K\right)\rightarrow \Tor_{i,j}^S\left(\frac{S}{\mathcal{I}_{G}},\K\right) 
		\rightarrow \Tor_{i-1,j}^S\left(\frac{S}{\mathcal{I}_{G\setminus e}:\bar{g}_e}(-2),\K\right)\rightarrow\cdots
	\end{equation} Observe that $G \setminus e$ is a tree and  $(G\setminus e)_e$ is a block graph on $[n]$. Since $G\setminus e$ is a bipartite graph, by Remark \ref{main-rmk} and Theorem \ref{betti-chordal}, $\beta_{i,j}(S/\mathcal{I}_{G\setminus e})=0$ for $j>2i$.
	It follows from Lemma \ref{colon-non-bipartite} that $\beta_{i-1,j-2}(S/\mathcal{I}_{G \setminus e}:\bar{g}_e)=\beta_{i-1,j-2}(S/J_{(G\setminus e)_e})$. Since  $(G\setminus e)_e$ is a chordal graph, by Theorem \ref{betti-chordal},  $\beta_{i-1,j-2}(S/\mathcal{I}_{G\setminus e}:\bar{g}_e)=0$ for $j>2i$.  Hence, $\beta_{i,j}(S/\mathcal{I}_G)=0$, for $j>2i$.

	\begin{proposition}\label{diamond-complete}
		Let $G$ be a graph on $4$ vertices. Then, we have
		\begin{enumerate}
			\item If $G$ is a diamond graph, then $S/\mathcal{I}_G$ has not pure resolution.
			\item If $G=K_4$, then $S/\mathcal{I}_{G}$ has not pure resolution.
			\item If $G$ is obtained by attaching an edge to a vertex of $K_3$, then $S/\mathcal{I}_G$ has not pure resolution.
		\end{enumerate}
	\end{proposition}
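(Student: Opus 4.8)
The plan is to prove all three parts by the same mechanism: to show that the minimal free resolution of $S/\mathcal{I}_G$ has a nonzero Betti number in homological degree $3$ at two different internal degrees, namely $\beta_{3,5}(S/\mathcal{I}_G)\neq 0$ and $\beta_{3,6}(S/\mathcal{I}_G)\neq 0$; by the definition of a pure resolution this already forces the conclusion. The second nonvanishing is free: each of the three graphs is non-bipartite, so $\beta_{3,6}(S/\mathcal{I}_G)\neq 0$ by Theorem \ref{betti-LSS}(4). Thus the whole task reduces to producing $\beta_{3,5}(S/\mathcal{I}_G)\neq 0$ in each case, which I would extract from the short exact sequence \eqref{ses-LS} for a well-chosen edge $e$ together with the long exact sequence of $\Tor$.

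For the diamond $G$ (part (1)) I would take $e$ to be the edge common to the two triangles, so that $G\setminus e=C_4$ is bipartite. Then $\mathcal{I}_{C_4}\cong J_{C_4}$ by Remark \ref{main-rmk} and $\mathcal{I}_{C_4}:\bar g_e=\varPhi(J_{(C_4)_e})$ by Lemma \ref{colon-non-bipartite}, where a direct inspection shows that $(C_4)_e$ is again a diamond; in particular it contains a triangle $\{a,b,c\}$, which gives the linear syzygy $y_cf_{a,b}-y_bf_{a,c}+y_af_{b,c}=0$ of its binomial edge ideal, so $\beta_{2,3}(S/(\mathcal{I}_{C_4}:\bar g_e))\neq 0$. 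On the other hand $C_4$ is connected bipartite, not a path, with $\ell(C_4)=2$, so $\reg(S/\mathcal{I}_{C_4})=2$ by Theorem \ref{reg-bipartite} and Corollary \ref{lower-bound}, whence $\beta_{2,5}(S/\mathcal{I}_{C_4})=0$. Reading the $\Tor$-sequence of \eqref{ses-LS} in internal degree $5$ at homological spot $3$, the connecting map $\Tor^S_{3,5}(S/\mathcal{I}_G,\K)\to\Tor^S_{2,3}(S/(\mathcal{I}_{C_4}:\bar g_e),\K)$ is surjective (its cokernel embeds in $\Tor^S_{2,5}(S/\mathcal{I}_{C_4},\K)=0$) with nonzero target, so $\beta_{3,5}(S/\mathcal{I}_G)\neq 0$.

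For $K_4$ (part (2)) I would write $K_4=D\cup e$ with $D=K_4\setminus e$ the diamond and apply \eqref{ses-LS}. By part (1) we already know $\beta_{3,5}(S/\mathcal{I}_D)\neq 0$, and Theorem \ref{betti-LSS}(3) gives $\beta_{3,3}(S/(\mathcal{I}_D:\bar g_e))=0$ (the colon ideal has no linear generator, hence no linear higher syzygies), so the map $\Tor^S_{3,5}(S/\mathcal{I}_D,\K)\hookrightarrow\Tor^S_{3,5}(S/\mathcal{I}_{K_4},\K)$ in the $\Tor$-sequence is injective and $\beta_{3,5}(S/\mathcal{I}_{K_4})\neq 0$. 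For $K_3$ with a pendant edge (part (3)) I would delete the triangle edge $e$ not meeting the leaf; then $G\setminus e=K_{1,3}$, the added ideal in Lemma \ref{colon-non-bipartite} is empty, so $\mathcal{I}_{K_{1,3}}:\bar g_e=\mathcal{I}_{K_{1,3}}$, $\bar g_e$ is a nonzerodivisor on $S/\mathcal{I}_{K_{1,3}}$, and \eqref{ses-LS} yields $\beta_{i,j}(S/\mathcal{I}_G)=\beta_{i,j}(S/\mathcal{I}_{K_{1,3}})+\beta_{i-1,j-2}(S/\mathcal{I}_{K_{1,3}})$. Since $\beta_{2,3}(S/\mathcal{I}_{K_{1,3}})=0$ (Theorem \ref{betti-LSS}(1)), it remains to show $\beta_{3,5}(S/\mathcal{I}_{K_{1,3}})\neq 0$. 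Now $\reg(S/\mathcal{I}_{K_{1,3}})=2$ (as for $C_4$), so $\beta_{3,j}(S/\mathcal{I}_{K_{1,3}})$ can be nonzero only for $j=5$: $\beta_{3,3}=0$ since $\mathcal{I}_{K_{1,3}}$ has no linear generator, $\beta_{3,4}=0$ since the first syzygies lie in degrees $\ge 4$ (using $\beta_{2,3}=0$), and $\beta_{3,j}=0$ for $j\ge 6$ by the regularity bound; hence it suffices that $\pd(S/\mathcal{I}_{K_{1,3}})\ge 3$. If instead $\pd(S/\mathcal{I}_{K_{1,3}})=2$, then by Auslander--Buchsbaum $S/\mathcal{I}_{K_{1,3}}$ is Cohen--Macaulay of codimension $2$ with three quadric minimal generators, and the Hilbert--Burch theorem forces its resolution to be $0\to S(-3)^2\to S(-2)^3\to S$, giving $\beta_{2,3}(S/\mathcal{I}_{K_{1,3}})=2\neq 0$, which contradicts Theorem \ref{betti-LSS}(1).

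The delicate part, and where the argument could fail, is the bookkeeping in the three $\Tor$-sequences: one must correctly identify each colon ideal $\mathcal{I}_{G\setminus e}:\bar g_e$ (and the graph $(C_4)_e$ for the diamond) and then verify that the desired Betti number genuinely survives, i.e. that the relevant connecting or restriction map has the right surjectivity/injectivity in the pertinent degree. This survival rests, respectively, on $\reg(S/\mathcal{I}_{C_4})=2$ for the diamond, on Theorem \ref{betti-LSS}(3) for $K_4$, and on the Hilbert--Burch obstruction for the pendant triangle; each is a short check, so I would settle these first and only then assemble the long exact sequences.
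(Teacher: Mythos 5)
Your proof is correct, and its skeleton is exactly the paper's: in each case you get $\beta_{3,6}(S/\mathcal{I}_G)\neq 0$ from Theorem \ref{betti-LSS}(4) and force $\beta_{3,5}(S/\mathcal{I}_G)\neq 0$ out of the long exact sequence of Tor attached to \eqref{ses-LS}, deleting the same edges (the diagonal of the diamond, any edge of $K_4$, the triangle edge opposite the leaf) and using the same vanishing inputs, namely $\beta_{2,5}(S/\mathcal{I}_{C_4})=0$ from $\reg(S/\mathcal{I}_{C_4})=2$ and $\beta_{3,3}$ of the colon ideals from Theorem \ref{betti-LSS}(3). Where you genuinely diverge is in how the two auxiliary nonvanishing facts are supplied. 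For the diamond, the paper quotes $\beta_{2,3}(S/J_{(C_4)_e})=4$ from \cite[Theorem 2.2]{KM12}, whereas you exhibit the explicit linear syzygy $y_cf_{a,b}-y_bf_{a,c}+y_af_{b,c}=0$ coming from a triangle of $(C_4)_e$; since the first syzygy module of an ideal minimally generated by quadrics lives in degrees at least $3$, this nonzero degree-$3$ element is automatically a minimal syzygy, so the argument is sound and self-contained. For the pendant triangle, the paper quotes $\beta_{3,5}(S/\mathcal{I}_{K_{1,3}})\neq 0$ from \cite[Theorem 5.3]{PZ}, whereas you derive it from $\reg(S/\mathcal{I}_{K_{1,3}})=2$, the degree growth of the syzygies, and a Hilbert--Burch contradiction with Theorem \ref{betti-LSS}(1) to force $\pd(S/\mathcal{I}_{K_{1,3}})\geq 3$. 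That route also works, but you should record the two small facts it uses silently: $\pd(S/\mathcal{I}_{K_{1,3}})\geq 2$ because the ideal is not principal, and $\h(\mathcal{I}_{K_{1,3}})=2$ (the three quadrics are irreducible and pairwise non-associate, so the ideal lies in no principal prime, while it is contained in $(x_1,y_1)$), which is what makes Auslander--Buchsbaum yield Cohen--Macaulayness of codimension $2$ in the hypothetical case $\pd=2$. With those noted, your substitutions buy a proof independent of \cite{KM12} and of \cite[Theorem 5.3]{PZ}, at the cost of a somewhat longer case (3).
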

	\begin{proof}
		Note that in each case $G$ is a non-bipartite graph. Therefore, by Theorem \ref{betti-LSS}, $\beta_{3,6}(S/\mathcal{I}_G) \neq 0$. Now, we prove that $\beta_{3,5}(S/\mathcal{I}_G) \neq 0$ in each case.	
		\par (1)   There is an edge $e=\{u,v\}$ such that $G\setminus e=C_4$. Consider the long exact sequence of Tor corresponding to short exact sequence \eqref{ses-LS}, \[
		\cdots	\rightarrow \Tor_{3,5}^S\left(\frac{S}{\mathcal{I}_{G}},\K\right) 
		\rightarrow \Tor_{2,5}^S\left(\frac{S}{\mathcal{I}_{G\setminus e}:\bar{g}_e}(-2),\K\right)\rightarrow \Tor_{2,5}^S\left(\frac{S}{\mathcal{I}_{G\setminus e}},\K\right)\rightarrow \cdots.\]
		Note that $G\setminus e$ is complete bipartite graph. By Theorem \ref{reg-3}, $\reg(S/\mathcal{I}_{G\setminus e})=2$, therefore, $\beta_{2,5}(S/\mathcal{I}_{G\setminus e})=0$. Notice that \[\Tor_{2,5}^S\left(\frac{S}{\mathcal{I}_{G\setminus e}:\bar{g}_e}(-2),\K\right) \simeq \Tor_{2,3}^S\left(\frac{S}{\mathcal{I}_{G\setminus e}:\bar{g}_e},\K\right).\] By Lemma \ref{colon-non-bipartite}, $\beta_{2,3}(S/(\mathcal{I}_{G\setminus e}:\bar{g}_e))=\beta_{2,3}(S/J_{(G\setminus e)_e})$. Since $(G\setminus e)_e$ is a diamond graph, by \cite[Theorem 2.2]{KM12}, $\beta_{2,3}(S/\mathcal{I}_{G\setminus e}:\bar{g}_e)=4$. The above long exact sequence yields that  $\beta_{3,5}(S/\mathcal{I}_G) \neq 0$. 
		\par (2) Let $e \in E(G)$ be an edge. Then, $G\setminus e$ is a diamond graph. It follows from Theorem \ref{betti-LSS} that $\beta_{3,3}(S/\mathcal{I}_{G\setminus e}:\bar{g}_e)=0$. Since \[\Tor_{3,5}^S\left(\frac{S}{\mathcal{I}_{G\setminus e}:\bar{g}_e}(-2),\K\right) \simeq \Tor_{3,3}^S\left(\frac{S}{\mathcal{I}_{G\setminus e}:\bar{g}_e},\K\right),\] the long exact sequence of Tor corresponding to short exact sequence \eqref{ses-LS},
		\[
		\cdots 0\rightarrow \Tor_{3,5}^S\left(\frac{S}{\mathcal{I}_{G\setminus e}},\K\right)\rightarrow \Tor_{3,5}^S\left(\frac{S}{\mathcal{I}_{G}},\K\right) \rightarrow   \cdots.\] Now, if $\beta_{3,5}(S/\mathcal{I}_G)=0$, then $\beta_{3,5}(S/\mathcal{I}_{G\setminus e})=0$, which is contradiction to part $(1)$. Therefore, $\beta_{3,5}(S/\mathcal{I}_G) \neq 0$.
		\par (3) Let $e \in E(G)$ such that $G\setminus e$ is a claw. It follows from \cite[Theorem 5.3]{PZ} that $\beta_{3,5}(S/\mathcal{I}_{G\setminus e}) \neq 0$. By Theorem \ref{betti-LSS}, $\beta_{3,3}(S/\mathcal{I}_{G\setminus e}:\bar{g}_e)=0$. Hence, $\beta_{3,5}(S/\mathcal{I}_G) \neq 0$.
	\end{proof}
	We now move on to characterize graphs whose parity binomial edge ideals have the pure resolution. 
	\begin{lemma}\label{induced-pure}
		Let $G$ be a graph on $[n]$ and $H$ be an induced subgraph of $G$. If $S/\mathcal{I}_G$ has the pure resolution, then $S_H/\mathcal{I}_H$ has pure resolution.
	\end{lemma}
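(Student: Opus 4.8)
The plan is to deduce the statement directly from the Betti-number comparison in Theorem~\ref{betti-induced}. Recall that, by definition, a quotient $S/I$ by a homogeneous ideal generated in a single degree $d$ has a pure resolution precisely when, for every homological index $i$ with $1 \le i \le \pd(S/I)$, there is a unique $j$ with $\beta_{i,j}^S(S/I) \neq 0$. For parity binomial edge ideals the ``generated in a single degree'' hypothesis is automatic: either $E(H) = \emptyset$, in which case $\mathcal{I}_H = 0$, so $S_H/\mathcal{I}_H = S_H$ is free and the conclusion is vacuous, or $E(H) \neq \emptyset$ and every generator $x_ix_j - y_iy_j$ of $\mathcal{I}_H$ has degree two.

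So assume $E(H) \neq \emptyset$ and that $S/\mathcal{I}_G$ has a pure resolution; for $1 \le i \le \pd(S/\mathcal{I}_G)$, let $d_i$ denote the unique degree with $\beta_{i,d_i}^S(S/\mathcal{I}_G) \neq 0$. Let $i$ be any index with $1 \le i \le \pd(S_H/\mathcal{I}_H)$, and suppose $\beta_{i,j}^{S_H}(S_H/\mathcal{I}_H) \neq 0$. By Theorem~\ref{betti-induced}, $\beta_{i,j}^S(S/\mathcal{I}_G) \geq \beta_{i,j}^{S_H}(S_H/\mathcal{I}_H) > 0$, so in particular $i \le \pd(S/\mathcal{I}_G)$, and by purity of the resolution of $S/\mathcal{I}_G$ we must have $j = d_i$. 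Thus $d_i$ is the only degree in which $\beta_{i,\bullet}^{S_H}(S_H/\mathcal{I}_H)$ can be nonzero; and it is nonzero there because $i \le \pd(S_H/\mathcal{I}_H)$ forces $\beta_{i,\bullet}^{S_H}(S_H/\mathcal{I}_H)$ to be nonzero for at least one $\bullet$. Hence $S_H/\mathcal{I}_H$ has a pure resolution.

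I do not expect a genuine obstacle here: the argument is a formal consequence of Theorem~\ref{betti-induced}, whose content (the identity $\mathcal{I}_H = \mathcal{I}_G \cap S_H$ together with the algebra-retract structure) is exactly what makes the coordinatewise domination $\beta_{i,j}^{S_H}(S_H/\mathcal{I}_H) \le \beta_{i,j}^S(S/\mathcal{I}_G)$ available. The only steps requiring (minor) care are handling the degenerate case $E(H) = \emptyset$ and observing that purity, being the statement that in each homological degree the graded shift is unique, is inherited under this domination of Betti tables.
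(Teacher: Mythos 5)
Your argument is correct and is exactly the paper's proof: the paper simply states that the lemma ``immediately follows from Theorem \ref{betti-induced},'' and your write-up fills in precisely the intended details (domination of Betti tables plus uniqueness of the shift in each homological degree). No issues.
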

	\begin{proof}
		The assertion immediately follows from Theorem \ref{betti-induced}. 
	\end{proof} 
	\begin{theorem}
		Let $G$ be a  graph on $[n]$. Then, $S/\mathcal{I}_G$ has  pure resolution if and only if $G$ is one of the following: 
		\begin{enumerate}
			\item $G$ is a complete bipartite graph.
			\item $G$ is a disjoint union of some  odd cycles and some paths.
		\end{enumerate}
	\end{theorem}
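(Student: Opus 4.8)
The plan is to prove the two implications separately. For the ``if'' direction: if $G$ is a disjoint union of odd cycles and paths, then each component has parity binomial edge ideal a complete intersection of quadrics (an odd cycle by \cite[Theorem 3.4]{AR2}, cf.\ Theorem~\ref{reg-odd-cycle}; a path $P_m$ because, by Remark~\ref{main-rmk}, $\mathcal{I}_{P_m}\cong J_{P_m}$, which is generated by $m-1$ quadrics of height $m-1$). Hence $\mathcal{I}_G$ is a complete intersection of $|E(G)|$ quadrics in disjoint sets of variables, $S/\mathcal{I}_G$ is resolved by a Koszul complex, and this resolution is pure with $\beta_{i,2i}(S/\mathcal{I}_G)=\binom{|E(G)|}{i}$ as its only nonzero Betti numbers. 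If $G$ is complete bipartite, then $S/\mathcal{I}_G\cong S/J_G$ by Remark~\ref{main-rmk}, and one checks that the minimal free resolution of the binomial edge ideal of a complete bipartite graph is pure with degree sequence $0<2<4<5<6<\cdots$; this is compatible with $\reg(S/J_G)=2$ (\cite[Theorem 1.1]{PZ}) and $\beta_{2,3}=0$ (Theorem~\ref{betti-LSS}(1)).

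For the ``only if'' direction, suppose $S/\mathcal{I}_G$ has a pure resolution. Since isolated vertices change neither the Betti numbers nor the conclusion, assume $G$ has no isolated vertex, and if $|E(G)|\le 1$ then $G=K_2$ lies on the list, so assume $|E(G)|\ge 2$. As $\mathcal{I}_G$ is generated in degree $2$, purity gives $d_1=2$, and $\beta_{2,3}(S/\mathcal{I}_G)=0$ together with $\beta_{2,4}(S/\mathcal{I}_G)\ne 0$ (Theorem~\ref{betti-LSS}(1),(2)) give $d_2=4$; moreover $d_0<d_1<d_2<\cdots$ in a pure resolution. By Lemma~\ref{induced-pure} and Proposition~\ref{diamond-complete}, $G$ has no induced diamond, $K_4$, or paw; so if $\{a,b,c\}$ spans a triangle and a vertex $d$ is adjacent to one, two, or all three of $a,b,c$, then $G[\{a,b,c,d\}]$ is a paw, a diamond, or $K_4$ --- impossible. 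Hence every triangle of $G$ is a connected component isomorphic to $K_3$. To reduce to the connected case, write $G=G_1\sqcup\cdots\sqcup G_r$: the minimal resolution of $S/\mathcal{I}_G$ is the tensor product of those of the $S/\mathcal{I}_{G_i}$ (disjoint variables), so each $S/\mathcal{I}_{G_i}$ is again pure (two nonzero Betti numbers of some $S/\mathcal{I}_{G_i}$ in the same homological but different internal degree would, after tensoring with the $\beta_{0,0}$ of the other factors, contradict purity of $S/\mathcal{I}_G$); and comparing, in homological degree $3$, the cross term $\beta_{2,4}(S/\mathcal{I}_{G_i})\,\beta_{1,2}(S/\mathcal{I}_{G_j})\ne 0$ (internal degree $6$) with the $\beta_{3,\cdot}(S/\mathcal{I}_{G_i})$, purity of $S/\mathcal{I}_G$ forces, when $r\ge 2$, that every component with at least three edges has $d_3=6$.

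It remains to classify connected $H$ with $|E(H)|\ge 2$ and $S/\mathcal{I}_H$ pure. If $H$ has a triangle, then by the above $H=K_3$, an odd cycle. Otherwise $H$ is triangle-free. If $H$ contains an induced claw $K_{1,3}$, then $\beta_{3,5}(S/\mathcal{I}_H)\ne 0$ (Theorem~\ref{betti-induced} and \cite[Theorem 5.3]{PZ}), so purity forces $d_3=5$ and $\beta_{3,6}(S/\mathcal{I}_H)=0$; but $\mathcal{I}_{P_4}$ is a complete intersection, so an induced $P_4$ in $H$ would give $\beta_{3,6}(S/\mathcal{I}_H)\ne 0$ by Theorem~\ref{betti-induced}, a contradiction --- thus $H$ is a connected $\{K_3,P_4\}$-free graph, which must be complete bipartite. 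If instead $H$ is triangle-free and claw-free, then every vertex has degree at most $2$ (three pairwise nonadjacent neighbours of a vertex would span an induced claw), so $H$ is a path or a cycle; odd cycles and $C_4=K_{2,2}$ are on the list, and it only remains to exclude the even cycles $C_{2k}$, $k\ge 3$. Granting this, a connected $H$ is a path, an odd cycle, or complete bipartite; and in the disconnected case no complete bipartite component on $\ge 4$ vertices can occur (it has $\reg=2$, hence $d_3=5$, contradicting the $d_3=6$ of the previous paragraph), so every component is a path or an odd cycle and $G$ is of type (2).

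The step I expect to be the main obstacle is excluding the even cycles $C_{2k}$ with $k\ge 3$: their proper induced subgraphs are only disjoint unions of paths, so there is no smaller forbidden configuration to lean on, and one must instead produce two graded Betti numbers of $S/J_{C_{2k}}$ in a common homological degree but distinct internal degrees --- for instance by combining the nonvanishing of $\beta_{i,2i}(S/J_{C_{2k}})$, forced by the induced path $P_{2k-1}$ (a complete intersection) via Theorem~\ref{betti-induced}, with an extremal Betti number sitting in a strictly higher row, using the known regularity and Betti table of the binomial edge ideal of an even cycle. A secondary point requiring care is the purity of the resolution of the binomial edge ideal of a complete bipartite graph, used both in the ``if'' direction and in the disconnected-case bookkeeping.
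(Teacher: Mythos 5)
Your overall architecture is sound and in places close to the paper's: the ``if'' direction (complete intersection/Koszul for odd cycles and paths, \cite[Theorem 5.3]{PZ} for complete bipartite graphs) is exactly the paper's argument, and your exclusion of paws, diamonds and $K_4$'s via Proposition \ref{diamond-complete}, combined with $\beta_{3,5}\neq 0$ for the claw against $\beta_{3,6}\neq 0$ from Theorem \ref{betti-LSS}(4), mirrors how the paper shows a connected non-bipartite graph with pure resolution must be $2$-regular. Your tensor-product bookkeeping for disconnected graphs is also a legitimate (arguably more transparent) substitute for the paper's appeal to \cite[Lemma 2.5]{KM4}.

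The genuine gap is exactly where you flag it: the even cycles $C_{2k}$, $k\geq 3$. You never exclude them; you only ``grant'' the exclusion and sketch a strategy. The paper does not face this problem because, for bipartite $G$, it invokes the known classification \cite[Theorem 2.2]{KM4} of graphs whose \emph{binomial} edge ideals have pure resolutions (via Remark \ref{main-rmk}), which already rules out even cycles of length $\geq 6$. Your proposed workaround --- play $\beta_{i,2i}\neq 0$ (from the induced path $P_{2k-1}$, a complete intersection) against an extremal Betti number in a higher row --- does not obviously close the gap: for $C_6$, say, one has $\pd(S/J_{C_6})=5$ and $\reg(S/J_{C_6})=4$, and the forced nonvanishings $\beta_{1,2},\beta_{2,4},\beta_{3,6},\beta_{4,8}$ together with the regularity bound are numerically consistent with a pure degree sequence $(0,2,4,6,8,9)$; note that $\beta_{4,8}$ and a putative $\beta_{5,9}$ both sit in row $4$ of the Betti table, so the extremal Betti number alone detects nothing. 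To actually kill this candidate you would need something finer --- e.g.\ the Herzog--K\"uhl equations (which give the non-integer value $36/7$ for $\beta_1$ of a Cohen--Macaulay module with that pure type), explicit Betti numbers of $J_{C_{2k}}$ from the literature, or simply the citation the paper uses. As written, the ``only if'' direction is therefore incomplete for connected bipartite graphs that are neither complete bipartite nor paths, and this is not a cosmetic omission but the one case your induced-subgraph machinery cannot reach, precisely because every proper induced subgraph of an even cycle is a disjoint union of paths and hence on the allowed list.
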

	\begin{proof}
		If $G$ is a complete bipartite graph, then by \cite[Theorem 5.3]{PZ} and by  Remark \ref{main-rmk}, $S/\mathcal{I}_G$ has  pure resolution. If $G$ is disjoint union of some odd cycles  and some paths, then it follows from \cite[Corollary 3.6]{AR2} that $\mathcal{I}_G$ is  a complete intersection. Therefore, the Koszul  complex is a minimal free resolution of $S/\mathcal{I}_G$. Hence, $S/\mathcal{I}_G$ has  pure resolution.
		
		Conversely,  we assume that $G$ is a bipartite graph and $S/\mathcal{I}_G$ has  pure resolution. Then, it follows from  Remark \ref{main-rmk} and \cite[Theorem 2.2]{KM4} that $G$ is either a complete bipartite graph or a disjoint union of some path graphs. So, now we assume that $G$ is a connected non-bipartite graph such that $S/\mathcal{I}_G$ has  pure resolution. We claim that $G$ is an odd cycle. If every vertex has degree two, then $G$ is an odd cycle. If there is a vertex $v$ such that $\deg_G(v) \geq 3$, then let $u_1,u_2,u_3 \in N_G(v)$. The induced subgraph $G[A]$, where $A=\{v,u_1,u_2,u_3\}$, is either a claw or a diamond graph or a complete graph $K_4$ or it is obtained by attaching an edge to a vertex of $K_3$. If $G[A]$ is a claw, then by \cite[Theorem 5.3]{PZ}, $\beta_{3,5}(S/\mathcal{I}_{G[A]}) \neq 0$.  Therefore, by Theorem \ref{betti-induced}, $\beta_{3,5}(S/\mathcal{I}_G) \neq 0$. Since $G$ is a non-bipartite graph, by Theorem \ref{betti-LSS}, $\beta_{3,6}(S/\mathcal{I}_G) \neq 0$, which is a contradiction. Therefore, $G[A]$ is not a claw. Now,  by Proposition \ref{diamond-complete}, $S/\mathcal{I}_{G[A]}$ has not pure resolution. Thus, by Lemma \ref{induced-pure}, $S/\mathcal{I}_G$ has not pure resolution which is a contradiction. Therefore, for every vertex $v$, $\deg_G(v) =2$, and hence, $G$ is an odd cycle.  Let $G_1,\ldots, G_k$ be connected components of $G$. By Lemma \ref{induced-pure}, $S/\mathcal{I}_{G_i}$  has  pure resolution for all $ i$. Therefore, $G_i$ is either a path graph or an odd cycle or a complete bipartite graph. Again by Lemma \ref{induced-pure}, for each pair of $1 \leq i <j \leq k$, $S/\mathcal{I}_{G_i \sqcup G_j}$ has  pure resolution. It follows from \cite[Lemma 2.5]{KM4} and \cite[Theorem 5.3]{PZ} that $G_i$ is either an odd cycle or a path.
	\end{proof}

	\vskip 2mm
	\noindent
	\textbf{Acknowledgements:} The author is grateful to his advisor A. V. Jayanthan for
	his constant support, valuable ideas, and suggestions. The  author thanks the National Board
	for Higher Mathematics, India for the financial support. 
	\bibliographystyle{plain}  %% or 
	\bibliography{Reference}
	
\end{document}